\numberwithin{equation}{section}
\newtheorem{theorem}{Theorem}[section]
\newtheorem{corollary}[theorem]{Corollary}
\newtheorem{lemma}[theorem]{Lemma}
\newtheorem{proposition}[theorem]{Proposition}
\theoremstyle{definition}
\newtheorem{example}[theorem]{Example}
\newtheorem{remark}[theorem]{Remark}
\newcommand{\C}{\mathbb{C}}
\newcommand{\R}{\mathbb R}
\newcommand{\F}{\mathcal F}
\newcommand{\Q}{\mathbb Q}
\newcommand{\Z}{\mathbb Z}
\newcommand{\N}{\mathbb N}
\newcommand{\p}{\mathbb P}
\newcommand{\xto}{\xrightarrow}
\def\url#1{\noindent \sf{#1}}
\begin{document}

\title{\bf Monodromy problem and Tangential center-focus problem for product of generic lines in $\p^2$}
\author{Daniel L\'opez Garc\'ia}
\date{}
\maketitle

\begin{abstract}
	We consider the rational map $F$ defined by the quotient of products of lines in  general position and we study the monodromy problem  and tangential center-focus problem for the fibration associated with  $F$. Thus, we study the submodule of the 1-homology group of a regular fiber of $F$ generated by the orbit of the monodromy action on a vanishing cycle. Moreover, we characterize the meromorphic 1-forms $\omega$ in $\mathbb{P}^2$ such that the Abelian integral $\int_{\delta_t}\omega$ vanishes on a family of cycles $\delta_t$ around a center singularity.
\end{abstract}

%\tableofcontents

%%%%%%%%%%%%%%%%%%%%%%%%%%%%%%%%%%%%%%
%%%%%%%%%%%%%%%%%%%%%%%%%%%%%%%%%%%%%%

\section*{Introduction}

One of the most explored approaches to the 16th Hilbert  problem  is through the study of Abelian integrals, which arise from the displacement function of a planar vector field. The analysis of the first variation of the displacement function is a way of bounding the number of limit cycles in one-parameter family of polynomial forms, perturbing a Hamiltonian form. Namely, given a system  
\begin{equation}
\label{weaksystem}
dF+\varepsilon \omega=0, \text{ where }\omega=p(x,y)dx+q(x,y)dy\text{ , }F,p,q\in \C[x,y] \text{ and }\varepsilon \text{ small enough},
\end{equation}
a periodic orbit $\delta_{z_0}\subset F^{-1}(t_0)$, and a transversal section $\Sigma$ to the solution of \eqref{weaksystem} in the point $\delta_{t_0}(0)$, we can define the displacement function  $f(t,\varepsilon):=\Delta_{\Sigma}(t)-t$, where $\Delta_{\Sigma}(t)$ is the Poincar\'e first return map. 

We can choose the transversal section $\Sigma$ parametrized by $F=t$. It is a known fact that the displacement function satisfies 
$$f(t,0)\equiv 0 \hspace{2mm}\text{, and }\hspace{2mm}\frac{d}{d \varepsilon}\bigg{|}_{\varepsilon=0}f(t,\varepsilon)=-\int_{\delta_t}\omega,$$
where $\delta_t\subset F^{-1}(t)$ is a loop for $t$ close to $t_0$, see for example \cite{Sergie}. Thus, it is defined the Abelian integral, as the complex multivalued function $I:\C\to \C\text{ given by }I_1(t)=-\int_{\delta_t}\omega$. The function $f(t,\varepsilon)$ is analytic in $(t, \varepsilon)$, hence we can write its Taylor series 
$$f(t, \varepsilon)=I_0(t)+\varepsilon I_1(t)+\varepsilon^2I_2(t)+O(\varepsilon^2),$$
where $I_0(t)\equiv 0$, and $I_1(t)$ is  the previously defined Abelian integral. Since $I_1\neq 0$, the number of  zeros of $I_1$ is an upper bound for the number of limit cycles of \eqref{weaksystem}. Bound the number of zeros of $I_1(t)$ is known as the tangential 16th Hilbert  problem, (see \cite{ChMonodromy, Sergie}). If $I_1\equiv 0$, it is necessary to study higher-order perturbations.

Then naturally arises the problem of classifying the conditions of $F$ and $\omega$ with which $I_1(t)$ vanishes over a family  $\delta_t$ of cycles around a Morse singularity or center singularity. In \cite{ChMonodromy} the authors call this problem the  \textit{tangential center-focus problem}. In \cite{Il}, Y. Ilyashenko proves  that for a generic polynomial $F$ ($F$ with isolated singularities, and  such that the critical points and critical values are different) $I_1(t)$ is null along a family of cycles around to a center if and only if $\omega=dA+BdF$ for $A,B\in \C[x,y]$. This characterization is done by showing that the monodromy action is transitive. Therefore, $I_1$ is zero over any cycle in the 1-homology group of any regular fiber of $F$, and the result is derived from the work of L. Gavrilov on Petrov modules \cite{gavrilovpetrov}.

The \textit{monodromy problem} asks for conditions on  $F$ such that the subspace generated by the orbit of the monodromy action on a vanishing cycle of a Morse singularity is equal to $H_1(F^{-1}(t), \Q)$ for any $t$ regular value of $F$. Moreover, in \cite{ChMonodromy}, C. Christopher and P. Marde{\v{s}}i{\'c} solve the monodromy problem for the hyperelliptic curves $F(x,y)=y^2+p(x)$. It is worth noting that this family of polynomials is closely related to the 0-dimensional case, which is studied in \cite{GavrilovMovasati}. In \cite{DanielLopezMonodromy4}, this problem is studied for the families of  polynomials $y^3+p(x)$ and $y^4+p(x)$, by using Dynkin diagrams.  Moreover, in this work the monodromy problem is solved for the  case of $p(x)+q(y)$ with $\deg(p)=\deg(q)=4$.

In \cite{hosseincenterlog}, H. Movasati studies the monodromy action for a product of lines in generic position  $F=l_0l_1\cdots l_d$ by using the results in \cite{ACampo}. Furthermore, he shows that if $I_1$ is zero along a family of vanishing cycles $\delta_t$, then $\omega=F\sum_{k=0}^d\lambda_k\frac{dl_k}{l_k}+dP$  where $\lambda_k\in \C$ and $P$ is a polynomial. The case of product of lines in generic position is also studied in \cite{MarcoU}. A generalization is given in \cite{pontigotangential}, where the irreducible components of $F$ are of the form $f_k^{n_k}+g_k$ where $n_k\in \N$, $f_k$ are linear polynomials without constant term, and $\deg(g_k)<n_k$. Another work where the tangential center-focus problem for non-generic polynomials is studied is \cite{yadollahcenter}, where the pullback case is studied.

In this work we study the monodromy problem and the tangential center-focus problem in the projective space $\p^2$. Some works like \cite{hosseinabelian} and \cite{yadollahBrieskorn} study these problems in the projective space as well for generic integrable foliations and pullback integrable foliations, respectively. In the projective context, it is necessary to consider rational functions $\frac{P(x,y,z)}{Q(x,y,z)}$, with $P$ and $Q$ being homogeneous polynomials of the same degree.  Here we consider the rational function defined by products of lines in generic position, that is $$F(x,y,z)=\frac{P(x,y,z)}{Q(x,y,z)}:=\frac{\Pi_{k=0}^d R_k}{\Pi_{k=d+1}^{2d+1}R_k}\text{ , where }R_k=(2d+1-k)x+ky-k(2d+1-k)z.$$  

The article is divided in two aspects, the first one is a topological study of the fibration defined by $F$. In section \ref{SectionHomology} we show that the vanishing cycles $\{\delta^P_{i}, \Delta^P_{j}, \delta^Q_{i}, \Delta^Q_{j}, \sigma_k\}$, $i=1,\ldots , \frac{d(d+1)}{2}$, $j=1,\ldots, \frac{d(d-1)}{2}$, $k=1, \ldots,d^2$, associated with the critical points of $F$ generate the 1-homology group of any regular fiber. In section \ref{seccionIntMatrix}  we compute the intersection matrix for the vanishing cycles of the fibration defined by $F$.
 Section \ref{seccionOrbits} is intended to study the monodromy problem for the rational map $F$. In this section we present the first main theorem of the article. We show that the orbit of  monodromy action on a vanishing generates the 1-homology group of any compact regular fiber. 
\begin{theorem}
For any regular value $b$ of $F$, the action of the monodromy on any vanishing cycle generates the homology $H_1(\overline{F^{-1}(b)})$.
\end{theorem}
The second aspect of the article is an application of the previous topological results to the study of holomorphic foliations in $\p^2$. The bridge between the topological aspect and the algebraic aspect of the foliations is through the Gauss-Manin connection. In section \ref{Brieskorn} we study the Brieskorn lattices/Petrov modules for 1-forms in $\p^2$ with pole along $D=\{Q=0\}$, and we also study the  Gauss-Manin connection $\nabla$ for $F$. In particular we compute the kernel of  $\nabla^2$. Finally, in section \ref{Foliations} we review the classical definitions of holomorphic foliations on $\p^2$ and the definition of the degree of a foliation in the projective space. Later we study the relatively exact meromorphic 1-forms modulo $\F(dF)$, and then conclude with the tangential center-focus problem for this foliation. \begin{theorem}
Let $\F(\omega)$ be a holomorphic foliation in $\p^2$ of degree $2d$, such that the meromorphic 1-form $\omega$ has poles along $D=\{Q=0\}$. Let $\delta_t\subset F^{-1}(t)$ be a continuous family of vanishing cycles around a center singularity, such that $\int_{\delta_t}\omega=0$. Then the form $\omega$ is written as
%	\begin{equation*}
	%\omega=F \sum_{k=0}^{2d+1}\lambda_k\frac{dR_k}{R_k}+\frac{1}{Q}\sum_{k=d+1}^{2d+1}\tilde \lambda_k\frac{dR_k}{R_k}+dG,
%	\end{equation*}
\begin{equation*}
\omega=F\left(\sum_{k=0}^{d}\lambda_k\frac{dR_k}{R_k}+\lambda_{d+1}\frac{dQ}{Q}\right)+dG,
\end{equation*}
where $\lambda_k\in  \C$ and $\sum_{k=0}^{d}\lambda_k+(d+1)\lambda_{d+1}=0$
%$\sum_{d+1}^{2d+1}\tilde \lambda_j=0$ 
and $G$ is a meromorphic function with poles along $D$.
\end{theorem}

\textbf{Acknowledgment}. I thank Hossein Movasati for introducing me to the study of monodromy action and for being a constant reference in my research. I thank Jorge Duque for his useful discussions in the first sections of the article, his points of view were very enlightening in the development of this work. I am grateful to Milo L\'opez for his support during the development of this project. The author was  supported by FAPESP grant 2022/04705-8.

\section{Homology of regular fibers}
\label{SectionHomology}
In this section we introduce the vanishing cycles for a fibration defined by a rational map on $\p^2$. Following the ideas of Lefschetz theory we show that the vanishing cycles generate the 1-homology group of a regular fiber. Furthermore, for the case where the rational map is the quotient of products of lines in general position, we calculate the dimension of the 1-homology group and provide an explicit description of the critical points associated with the vanishing cycles.\\

\noindent Let $F:\p^2\dashedrightarrow \p^1$ be a rational map defined by 
$$F(x,y,z)=\frac{P(x,y,z)}{Q(x,y,z)},$$ where $P$ and $Q$ are homogeneous polynomials of degree $d+1$. Let $N=\{P=Q=0\}$ be the set of indeterminacy. For $K\subset \p^1$  we denote by $L_K$ the preimage of $K$ via the map $F$. Note that this is equivalent to consider the blow-up of $\p^2$ along $N$, defining the preimage in the blow-up and considering the difference with the indeterminacy set \cite{hodgehossein}. 

According to Lefschetz theory, for $a$ and $b$  regular values of $F$, the homology groups satisfy $H_1(L_{\p^1\setminus a}, L_b)=0$, and $H_2(L_{\p^1\setminus a}, L_b)$ is a free module of finite dimension $\mu$ (see for example \cite[Thm 6.4]{hodgehossein}, \cite{Lamotke}
). The 2-homology $H_2(L_{\p^1\setminus a}, L_b)$ is generated by the so called \textit{Lefschetz thimbles}, and their boundaries are the \textit{vanishing cycles}. Moreover, if the critical points of $F$ are non-degenerated, then the vanishing cycles are in correspondence with the critical points of $F$, i.e. $\mu=\# \text{ of critical points of }F$. In general, if the critical points of $F$ are isolated, then we can consider a perturbation of $F$ with non degenerated critical points.
\begin{proposition}
The vanishing cycles generate the homology $H_1(L_b)$.
\end{proposition}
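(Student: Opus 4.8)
The plan is to follow the standard Lefschetz-theoretic argument, using the long exact sequence of the pair $(L_b, \text{pt})$ together with the vanishing of $H_1(L_{\p^1\setminus a}, L_b)$ that is quoted just above the statement. First I would recall the setup: fix a regular value $b$ and a base point $a$, and consider the inclusion $L_b \hookrightarrow L_{\p^1\setminus a}$. The total space $L_{\p^1\setminus a}$ is homotopy equivalent to the fiber $L_b$ with all the Lefschetz thimbles attached, one for each critical point, so it should be homotopically trivial in the relevant degree (it fibers over a disk-like base $\p^1\setminus a$, hence retracts onto a wedge of thimbles). The key homological input is the long exact sequence of the pair $(L_{\p^1\setminus a}, L_b)$:
\begin{equation}
\cdots \to H_2(L_{\p^1\setminus a}, L_b) \xrightarrow{\partial} H_1(L_b) \to H_1(L_{\p^1\setminus a}) \to H_1(L_{\p^1\setminus a}, L_b) \to \cdots
\end{equation}

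The argument then proceeds by identifying the two ends. By the cited result $H_1(L_{\p^1\setminus a}, L_b)=0$, so the map $H_1(L_b) \to H_1(L_{\p^1\setminus a})$ is surjective with the relative group vanishing on the right. To conclude that $\partial$ is surjective onto $H_1(L_b)$ I would show that $H_1(L_{\p^1\setminus a})=0$, so that the connecting homomorphism $\partial$ becomes surjective. This is where the geometry enters: $L_{\p^1\setminus a}$ is, up to the finitely many vanishing thimbles, homotopy equivalent to a regular fiber times a contractible base, and more precisely one argues that $L_{\p^1\setminus a}$ deformation retracts onto $L_b$ union the Lefschetz thimbles, which kills $H_1$ because the base $\p^1\setminus a \cong \C$ is simply connected and the only cells added beyond $L_b$ are the $2$-dimensional thimbles. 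Since the boundary map sends each generator of $H_2(L_{\p^1\setminus a}, L_b)$ — i.e.\ each Lefschetz thimble — to its boundary vanishing cycle, surjectivity of $\partial$ is precisely the statement that the vanishing cycles generate $H_1(L_b)$.

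The main obstacle I anticipate is rigorously establishing that $H_1(L_{\p^1\setminus a})=0$ in the rational (as opposed to polynomial) setting, since here $F$ is only a rational map with an indeterminacy locus $N=\{P=Q=0\}$ rather than a genuine morphism. The clean way around this is to pass to the blow-up of $\p^2$ along $N$, as the excerpt indicates: on the blow-up $\widetilde{\p^2}$ the map $F$ becomes a genuine fibration $\widetilde{F}\colon \widetilde{\p^2}\to \p^1$, and $L_{\p^1\setminus a}$ is the complement of a fiber (the difference with the indeterminacy set requires care but does not affect $H_1$ of the relevant open piece). One must verify that removing the single fiber over $a$ does not reintroduce $H_1$ beyond what the thimbles account for; this follows because restricting $\widetilde{F}$ over the disk $\p^1\setminus a \cong \C$ gives a locally trivial fibration away from the finitely many critical fibers, and the monodromy/vanishing-cycle description shows the total space over the disk collapses onto the generic fiber with thimbles glued in. Thus the verification that the intermediate homology vanishes, so that $\partial$ is forced to be onto, is the step deserving the most attention; the rest is a formal diagram chase in the long exact sequence.
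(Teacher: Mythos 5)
Your overall skeleton matches the paper's: both proofs run the long exact sequence of the pair $(L_{\p^1\setminus a}, L_b)$, identify $H_2(L_{\p^1\setminus a}, L_b)$ as generated by the Lefschetz thimbles with $\partial$ sending thimbles to vanishing cycles, and reduce the proposition to the surjectivity of $\partial$, which follows once $H_1(L_{\p^1\setminus a})=0$. The problem is in how you justify that last vanishing. You argue that $L_{\p^1\setminus a}$ deformation retracts onto $L_b$ with the thimbles attached, and that this ``kills $H_1$'' because the base is contractible and the thimbles are $2$-cells. But attaching $2$-cells to $L_b$ only kills the classes of their attaching circles, i.e.\ the vanishing cycles; what that retraction actually gives you is
$H_1(L_{\p^1\setminus a})\cong H_1(L_b)/\langle \text{vanishing cycles}\rangle$.
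Asserting that this quotient is zero is literally the statement you are trying to prove, so your argument for $H_1(L_{\p^1\setminus a})=0$ is circular. Simple connectivity of the base contributes nothing here: for a fibration over a disk with no critical points at all, the total space retracts onto $L_b$ and $H_1$ is certainly not zero.

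What is missing is an \emph{independent} computation of $H_1(L_{\p^1\setminus a})$, coming from the geometry of the total space rather than from the fibration structure. This is exactly what the paper supplies: since $a$ is a regular value, $C=\{P-aQ=0\}$ is a smooth curve containing the indeterminacy locus $N$, so
$L_{\p^1\setminus a}=\p^2\setminus\bigl(N\cup F^{-1}(a)\bigr)=\p^2\setminus C$,
and the Lefschetz hyperplane theorem for the complement of a smooth hypersurface gives $H_1(\p^2\setminus C)=0$ (with the coefficients used in the paper) and $H_2(\p^2\setminus C)$ free of finite rank. Plugging this into the exact sequence yields $\Z^{\nu}\to H_2(L_{\p^1\setminus a},L_b)\xto{\partial} H_1(L_b)\to 0$ and hence the surjectivity of $\partial$. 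Your instinct to pass to the blow-up to handle the indeterminacy locus is reasonable but unnecessary once one notices the identification $L_{\p^1\setminus a}=\p^2\setminus C$; without some such input your proof does not close.
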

\begin{proof}
From the pair $(L_{P^1\setminus a}, L_b)$ we have the sequence in homology 
\begin{equation}
    \label{SequenceCoro6.1}
    \ldots\to H_2(L_{\p^1\setminus a})\to H_2(L_{\p^1\setminus a},L_b)\xto{\partial} H_1(L_b)\to H_1(L_{\p^1\setminus a})\to \ldots
\end{equation}
The relative homology $H_2(L_{\p^1\setminus a},L_b)$ is generated by the Lefschetz thimbles, and the boundary map $\partial$ transforms them into the vanishing cycles.
Thus, we study the topology of $L_{\p^1\setminus a}$. Because $a$ is a regular value, the curve $C=\{P-aQ=0\}$ is a smooth Riemann surface of genus $g=\frac{d(d-1)}{2}$. Since, $F^{-1}(a)=C\setminus N$, $$L_{\p^1\setminus a}=\p^2\setminus(N\cup F^{-1}(a))=\p^2\setminus C.$$ By Lefschetz hyperplane theorem we have $H_1(L_{\p^1\setminus a})=0$, and $H_2(L_{\p^1\setminus a})$ is a free module of finite rank,  (see for example \cite[\S 5.3]{hodgehossein}). Consequently,  the sequence in \eqref{SequenceCoro6.1} becomes 
\begin{equation}
    \label{SequenceCoro6.1a}
    \Z^{\nu} \to H_2(L_{\p^1\setminus a},L_b)\xto{\partial} H_1(L_b)\to 0.
\end{equation}
Therefore, the boundary map $\partial$ is surjective. If  $H_2(L_{\p^1\setminus a})=0$, then  $\partial$ is an isomorphism.
\end{proof}

\begin{example}
\label{exam:lineas}
Consider $F$ as the quotient of products of lines in general position, i.e.,
$$F(x,y,z)=\frac{P(x,y,z)}{Q(x,y,z)}:=\frac{\Pi_{k=0}^d R_k}{\Pi_{k=d+1}^{2d+1}R_k}\text{ , where }R_k=(2d+1-k)x+ky-k(2d+1-k)z.$$ 
The critical points of $F$ are in the affine chart $z=1$.  The critical points of $F$  are divided in three groups: The  \textit{type 1 points}, the \textit{type 2 points}, the \textit{type 3 points}. The type 1 points are associated with the intersections of the lines of $P$ or $Q$. The type 2 points are associated with the relatively compact component between the lines of $P$ or $Q$. The type 3 points are associated with the relatively compact component between two lines of $P$ and two lines of $Q$.  For example in the Figure \ref{Linesd4}, the real part of $F$ is graphed for $d=2$. The crosses in blue and green are the type 1 points, the points in blue and green are the type 2 points, the crosses in red are the type 3 points, and the points in black are the indeterminacy points.

\begin{figure}[h!]
  \centering
    \includegraphics[width=0.6\textwidth]{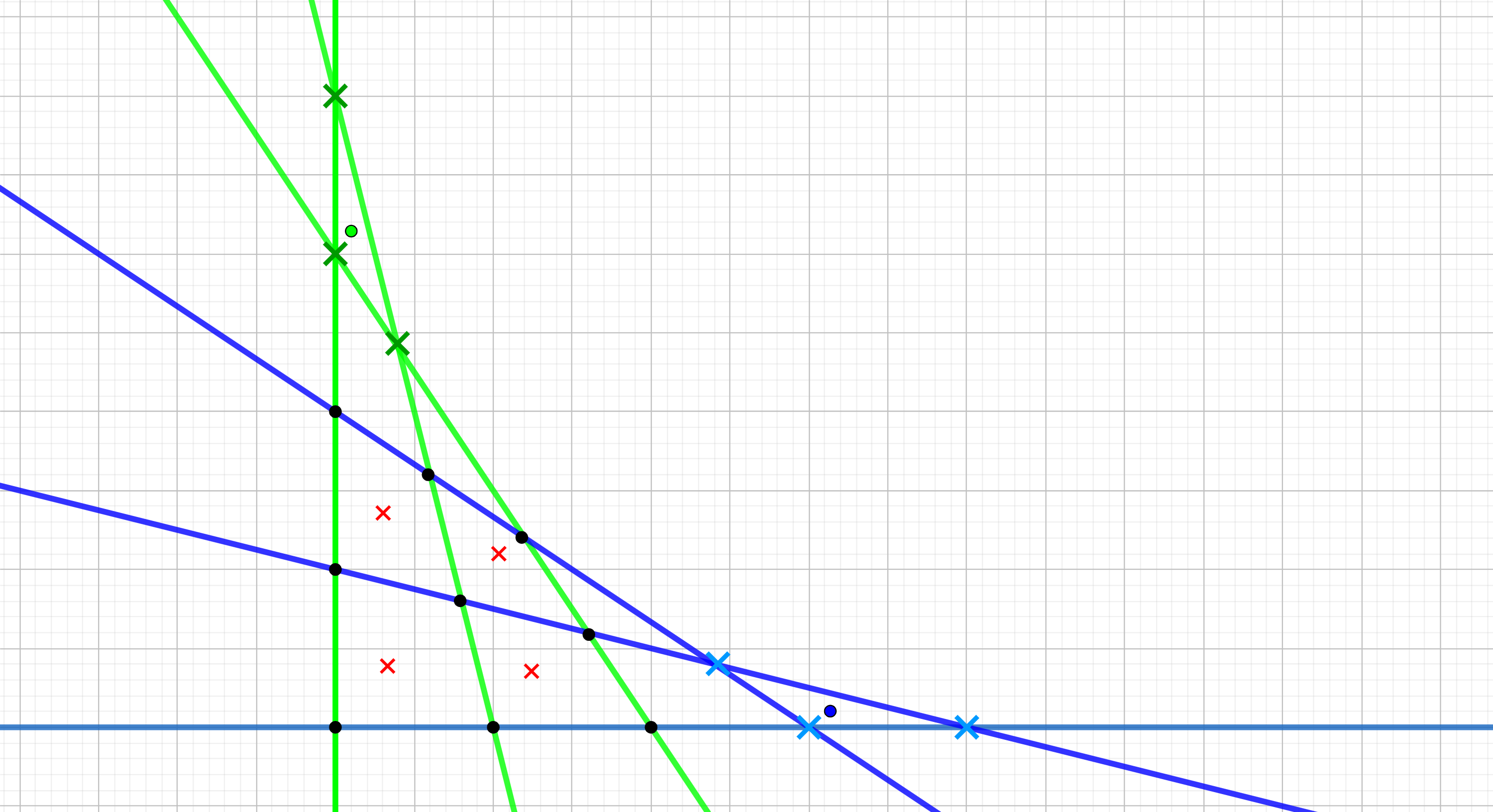}
  \caption{{Real curve  $F(x,y,1)=\frac{\Pi_{k=0}^d R_k(x,y,1)}{\Pi_{k=d+1}^{2d+1}R_k(x,y,1)}\text{ , where }R_k(x,y,1)=(2d+1-k)x+ky-k(2d+1-k),$ with $d=2$.}}
  \label{Linesd4}
\end{figure}
The number of type 1 points is $d(d+1)$, the number of type 2 points is $d(d-1)$ and the number of the type 3 points is $d^2$. Hence, the number of critical points of $F$ is $3d^2$. On the other hand, the number of indeterminacy points is $(d+1)^2$. Therefore, the dimension of the 1-homology of a regular fiber $L_b$ is $$\dim(H_1(L_b))=2g(L_b)+(d+1)^2-1=2\left(\frac{d(d-1)}{2}\right)+(d+1)^2-1=d(2d+1).$$
Note that for $d>1$ it is satisfied  $3d^2>d(2d+1)$, thus there are more vanishing cycles that the dimension of $H_1(L_b)$. Furthermore, the  kernel of the map $\partial$ in \eqref{SequenceCoro6.1a} has dimension $d(d-1)$.
\end{example}

\begin{example}
\label{d=1Homology}
For $d=1$, we have $F(x,y,1)=\frac{x(2x+y-2)}{y(x+2y-2)}.$
Thus, $C=\{P-aQ=0\}$ is a Riemann surface of genus 0. Therefore, $$L_{\p^1\setminus a}=F^{-1}(\p^1\setminus a)=\p^2\setminus(N\cup (C\setminus N))=\p^2\setminus \p^1=\C^2.$$
Then, $H_1(L_{\p^1\setminus a})=H_2(L_{\p^1\setminus a})=0$. Consequently the map $H_2(L_{\p^q\setminus a}, L_b)\xto{\partial}H_1(L_b)$, is an isomorphism. In this case, $L_b$ is a 2-sphere without the 4 points in $N$, thus the homology group $H_1(L_b)=\Z^3$ and therefore $H_2(L_{\p^1\setminus \infty}, L_b)=\Z^3.$ %As in Example \ref{exam:lineas}, another way to see the latter homology group is the following: a cycle associated with the intersection of the two lines of the numerator of the function $F$, see green lines in Figure \ref{fig:d=1}. Another cycle associated with the intersection of the two lines of the denominator of the function $F$, see blue lines in Figure \ref{fig:d=1}. The last one is associated with the relative compact component in the intersection of the lines, see the red point in Figure \ref{fig:d=1}.
\end{example}

%\begin{figure}[h!]
%\label{fig:d=1}
%  \centering
%    \includegraphics[width=0.6\textwidth]{d=1.png}
%  \caption{Real curve given by $F(x,y,1)=\frac{x(2x+y-2)}{y(x+2y-2)}$.}
%\end{figure}

\section{Intersection matrix of regular fibers}
\label{seccionIntMatrix}
We compute the intersection matrix for the vanishing cycles associated with the rational map  given by the quotient of products of lines in general position. Inspired by the work of N. A'Campo \cite{ACampo} we calculate the intersection between  vanishing cycles using the real part of the graph of the rational map in the real plane. This computation is done in Proposition \ref{maintheorem1}.\\ %The non-trivial part of this calculation lies in the vanishing cycles that appear with the points of indeterminacy. \\

\noindent Let $F:\p^2\dashedrightarrow\p$ be a rational map given by $F(x,y,z)=\frac{P(x,y,z)}{Q(x,y,z)}$, where $P,Q\in \R[x,y,z]$ are relative prime in $\C[x,y,z]$. We suppose that the critical points and indeterminacy points of $F$ are in  the real part of the local chart $z=1$, and they are isolated.  Consider the real curves given by $C_P=\{P=0\}$, $C_Q=\{Q=0\}$. Thus, the critical points of  $F$ are in one to one correspondence with the points of self intersections of $C_P$ and relatively compact components of its complement,  with the points of self intersection of $C_Q$ and relatively compact components of its complement, also, with the relatively compacts components in the intersection of $C_P$ and $C_Q$.  To define the intersection matrix to the homology $H_1(F^{-1}(b))$, we can consider a perturbation of $F$, such that the critical points are non-degenerate critical points \cite{singularnold}. 

Recall that $p$ is a critical point if $\text{grad}(F)|_p=0$. Moreover, a critical point $p$  is a  saddle  if the eigenvalues of the Hessian matrix $H_F|_p$ are one positive and one negative, and it is a real center if the eigenvalues are both positives or both negatives. Note that these definitions  are particular cases of  \textit{center points} in holomorphic foliations of $\p^2$ (or $\C^2$), which is defined in \cite{netoFwithcenter}. In fact, under local biholomorphisms  we can change a saddle to a real center; for example the fibration defined by $x^2+y^2=b$ is equivalent to the fibration defined by $xy=b$.

In Proposition \ref{lemmaACampo} we present a statement that provides an explicit computation of the intersection between two vanishing cycles that are both at saddle points or that are both at real center points. The result is inspired by \cite{ACampo}, and it is an application of the Picard-Lefschetz formula. Furthermore, in Corollary \ref{lemmaACampoCoro}, we give a criterion to show the non-intersection of two vanishing cycles.

\begin{proposition}
\label{lemmaACampo}
Let $F(x,y,z)=\frac{P(x,y,z)}{Q(x,y,z)}:\p^2\dashedrightarrow \p^1$ satisfying the above conditions.  Let  $\alpha_-$ and $\alpha_+$ be vanishing cycles defined in a regular fiber $F=b$. We suppose that $\alpha_-$ vanishes  over the critical value $-(a+\varepsilon)<0$, and $\alpha_+$  vanishes over the critical value $(a+\varepsilon)>0$, and $\varepsilon>0$. Furthermore, we assume that the map $F$ has only 0 as critical value in the interval $[-a,a]$. Moreover, we  suppose that both $\alpha_-$ and $\alpha_+$ are vanishing cycles at real center points  or both are vanishing cycles at saddle points. Then 
\begin{equation}
\label{interformulasillacentro}
\langle \alpha_-, \alpha_+\rangle=\frac{1}{2}\sum_{\alpha_l\cap \alpha_+\neq \emptyset}\langle \alpha_+,\alpha_l\rangle\langle \alpha_-,\alpha_l\rangle ,\text{ where the cycles }\alpha_l \text{ vanish over }0.
\end{equation}
\end{proposition}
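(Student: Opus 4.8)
The plan is to compute $\langle\alpha_-,\alpha_+\rangle$ directly in the real picture of the fiber, à la \cite{ACampo}, by continuing the cycle $\alpha_+$ across the critical value $0$ and reading off how it meets $\alpha_-$. The two tools are the Picard--Lefschetz formula, which for a curve fiber reads $h_\delta(\gamma)=\gamma+\langle\gamma,\delta\rangle\delta$, and an explicit description of the monodromy around $0$. First I would fix a base fiber $F^{-1}(b)$ with $b$ real and a distinguished system of paths joining $b$ to the three relevant critical values $-(a+\varepsilon)$, $0$ and $(a+\varepsilon)$, so that $\alpha_-$, $\alpha_+$ and the cycles $\alpha_l$ are all realized as embedded cycles in $F^{-1}(b)$. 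Since the $\alpha_l$ are the ovals vanishing at the distinct real centers (resp.\ saddles) lying over $0$, they can be realized with pairwise disjoint support, and hence the full monodromy $h_0$ around $0$ is the product of the corresponding Dehn twists, $h_0(\gamma)=\gamma+\sum_l\langle\gamma,\alpha_l\rangle\alpha_l$.

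The crucial step is the continuation of $\alpha_+$ along the real segment toward the left. Realized just to the right of $0$, the cycle $\alpha_+$ is supported away from $\alpha_-$, whose support is concentrated near the left critical value $-(a+\varepsilon)$; these two supports are separated by the singular fiber over $0$, so the ``pre-crossing'' copy of $\alpha_+$ does not meet $\alpha_-$ and contributes $0$. The hypothesis that $\alpha_+$ and $\alpha_-$ both vanish at real centers, or both at real saddles, guarantees that $\alpha_+$ meets the singular fiber over $0$ only transversally at the critical points where the $\alpha_l$ vanish, and that the geometric continuation across $0$ is a \emph{half-twist}: the local model rotates the relevant strands of $\alpha_+$ by $\pi$ rather than by the $2\pi$ of the full monodromy $h_0$. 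Consequently, after crossing, the continued cycle acquires one half of the Picard--Lefschetz correction along each $\alpha_l$ it meets, i.e.\ it equals $\alpha_+^{\mathrm{pre}}+\tfrac12\sum_{\alpha_l\cap\alpha_+\neq\emptyset}\langle\alpha_+,\alpha_l\rangle\,\alpha_l$ in $H_1(F^{-1}(b))$. Pairing with $\alpha_-$ and using that $\alpha_+^{\mathrm{pre}}$ is disjoint from $\alpha_-$ yields exactly $\langle\alpha_-,\alpha_+\rangle=\tfrac12\sum_{\alpha_l\cap\alpha_+\neq\emptyset}\langle\alpha_+,\alpha_l\rangle\langle\alpha_-,\alpha_l\rangle$.

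The main obstacle is the rigorous justification of the half-twist together with the sign and orientation bookkeeping that produces the coefficient $\tfrac12$ with the correct sign. Concretely, one must verify that the local continuation of a real vanishing cycle across a real center is the square root of the corresponding Dehn twist, and that, under the chosen orientation of the vanishing cycles, the products $\langle\alpha_+,\alpha_l\rangle\langle\alpha_-,\alpha_l\rangle$ all appear with a uniform sign; this is precisely where the assumption ``both centers or both saddles'' enters, since mixing the two types changes the local model and hence the sign of the half-twist. I would treat the center case first, with $\alpha_l$ a small real oval and $\alpha_\pm$ arcs crossing it, and then reduce the saddle case to it by means of the local biholomorphism $x^2+y^2\sim xy$ noted before the statement. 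The remaining manipulation --- expanding the pairing and using the antisymmetry $\langle\alpha_l,\alpha_+\rangle=-\langle\alpha_+,\alpha_l\rangle$ --- is routine once the half-twist and the disjointness of $\alpha_+^{\mathrm{pre}}$ and $\alpha_-$ are established.
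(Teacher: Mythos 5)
Your overall strategy --- continue $\alpha_+$ across the critical value $0$ and read off its intersection with $\alpha_-$ --- points in the right direction, but the step you yourself flag as the main obstacle, the ``half-twist,'' is a genuine gap, and in the form you state it it cannot be repaired. The claimed identity
\[
\alpha_+^{\mathrm{cont}}=\alpha_+^{\mathrm{pre}}+\tfrac12\sum_{\alpha_l\cap\alpha_+\neq\emptyset}\langle\alpha_+,\alpha_l\rangle\,\alpha_l
\]
is supposed to hold in $H_1(F^{-1}(b),\Z)$, but the right-hand side is not an integral class when some $\langle\alpha_+,\alpha_l\rangle$ is odd. Any actual continuation of $\alpha_+$ past $0$ must follow a path avoiding the critical value, i.e.\ passing either above or below it; these two choices give honest integral cycles differing by the \emph{full} Picard--Lefschetz correction, and there is no third, canonical ``real'' continuation equal to their average. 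Relatedly, the assertion that the pre-crossing copy of $\alpha_+$ is disjoint from $\alpha_-$ is not meaningful as stated: the two cycles initially live in fibers on opposite sides of the singular fiber over $0$, and comparing them forces a choice of transport, which is exactly where the monodromy correction you are trying to halve enters.

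The paper resolves precisely this point with a complex-conjugation symmetry argument, which is the missing idea. Let $T_1,T_2:H_1(F^{-1}(-a))\to H_1(F^{-1}(a))$ be the transports along the two conjugate paths running above and below $0$. Complex conjugation exchanges these paths, fixes (center case) or negates (saddle case) the real vanishing cycles, and reverses the orientation of the fiber, whence $\langle\alpha'_-,(T_1+T_2)\alpha'_+\rangle=0$ for the representatives $\alpha'_\pm$ in the fibers over $\pm a$. Therefore $2\langle\alpha'_-,T_1\alpha'_+\rangle=\langle\alpha'_-,(T_1-T_2)\alpha'_+\rangle$, and $T_1-T_2$ is computed by the ordinary integral Picard--Lefschetz formula; the factor $\tfrac12$ appears from this averaging against $\alpha'_-$, never as a coefficient of an actual cycle. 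Note also that the hypothesis ``both centers or both saddles'' is used to make the two conjugation signs cancel in the displayed identity, not in a local sign analysis of a half-twist, so your planned reduction of the saddle case to the center case via $x^2+y^2\sim xy$ becomes unnecessary.
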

\begin{proof}
 Consider the paths $\gamma_1$ and $\gamma_2$ in $\C$, from $-a$ to $a$ without passing through critical values. Moreover, we consider these paths as in Figure \ref{pathslemma1}. Thus, if we consider the complex conjugation $\overline{\gamma_1}$, then we obtain the path $\gamma_2$.

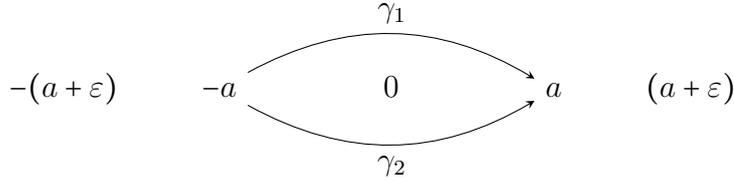
\begin{figure}[h!]
	\centering
	\begin{tikzpicture}
	\matrix (m) [matrix of math nodes, row sep=0.4em,
	column sep=2em]{
		-(a+\varepsilon)&-a & &0& &a&(a+\varepsilon)\\
	};			
	\path[-stealth]
	(m-1-2) edge [bend left=30] node [above] {$\gamma_1$}  (m-1-6) 
	edge [bend right=30] node [below] {$\gamma_2$} (m-1-6);
	\end{tikzpicture}
		\caption{\footnotesize Paths $\gamma_1$ and $\gamma_2$ from $-a$ to $a$.}
\label{pathslemma1}
\end{figure}  
\noindent Let $T_1$ and $T_2$ be the transformations $T_1,T_2: H_1(F^{-1}(-a))\to H_1(F^{-1}(a))$
induced by $\gamma_1$ and $\gamma_2$, respectively. Let $\alpha'_-$  the a vanishing cycle in the fiber $F^{-1}(a)$, which is transported to $\alpha_-$  through a path from $a$ to $b$ without monodromy. Similarly, let $\alpha'_+$ be a vanishing cycles in the fiber $F^{-1}(-a)$ transported to $\alpha_+$.  We claim that  
\begin{equation}
\label{simetriaecuacionlema}
\langle \alpha'_-, (T_1+T_2) \alpha'_+\rangle=0.
\end{equation} 
If $\alpha'_-$ and $\alpha'_+$ are vanishing cycles at center points, then the complex conjugation does not change them, that is $\overline{\alpha'_-}=\alpha'_-$ and $\overline{\alpha'_+}=\alpha'_+$. If the cycles $\alpha_-$ and $\alpha_+$ are vanishing cycles at saddle points, then $\overline{\alpha'_-}=-\alpha'_-$ and $\overline{\alpha'_+}=-\alpha'_+$. Thus, the left side of the Equation \eqref{simetriaecuacionlema} is invariant under complex conjugation. Since, this conjugation reverses the orientation of the fiber $F^{-1}(a)$, there must be a sign change in the intersection number. %Consequently, we obtain the Equation \eqref{simetriaecuacionlema}.

By Picard-Lefschetz formula, $T_1T_2^{-1}(T_2\alpha'_+)=T_2\alpha'_++x, $
where $x=\sum_{\alpha_l\cap \alpha_+\neq \emptyset}\langle \alpha'_+,\alpha_l\rangle\alpha_l$  and the cycles $\alpha_l$  vanish over 0. Therefore, 
\begin{align*}
    2\langle \alpha_-, \alpha_+\rangle&= 2\langle \alpha'_-, T_1 \alpha'_+\rangle \stackrel{\eqref{simetriaecuacionlema}}{=}\langle \alpha'_-, T_1\alpha'_+\rangle-\langle\alpha'_-, T_2 \alpha'_+\rangle\\
    &= \langle \alpha'_-, T_1\alpha'_+\rangle - \langle \alpha'_-, T_1T_2^{-1}(T_2 \alpha'_+)-x\rangle=\langle \alpha'_-,x\rangle.
\end{align*}

\end{proof}
\begin{corollary}
\label{lemmaACampoCoro}
Let $F(x,y,z)=\frac{P(x,y,z)}{Q(x,y,z)}:\p^2\dashedrightarrow \p^1$, as in Proposition \ref{lemmaACampo}. Let  $\alpha_-$ and $\alpha_+$ be vanishing cycles defined in the regular fiber $F=b$. We suppose that $\alpha_-$ vanishes  over the critical value $a_-$, and $\alpha_+$  vanishes over the critical value $a_+$, and $a_-<a_+$. Moreover, we  suppose that both $\alpha_-$ and $\alpha_+$ are vanishing cycles at real center points or both are vanishing cycles at saddle singularities. If there are not critical values in the interval $(a_-, a_+)$, then $$\langle \alpha_-, \alpha_+\rangle=0.$$ The same is true, if for any critical value $a_0\in(a_-, a_+)$ the vanishing cycle over $a_0$ does not intersect $\alpha_+$.
\end{corollary}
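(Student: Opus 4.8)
The plan is to reprove Corollary \ref{lemmaACampoCoro} by repeating the argument of Proposition \ref{lemmaACampo} almost verbatim, now allowing the interval between the two critical values to contain an arbitrary (possibly empty) set of critical values instead of the single value $0$. First I would take two paths $\gamma_1,\gamma_2$ in $\C$ from $a_-$ to $a_+$ avoiding all critical values, with $\gamma_1$ in the upper half plane and $\gamma_2$ in the lower half plane, chosen so that complex conjugation interchanges them. This is possible because $P,Q$ are real, so conjugation commutes with $F$ and fixes the real endpoints $a_-,a_+$. Let $T_1,T_2\colon H_1(F^{-1}(a_-))\to H_1(F^{-1}(a_+))$ be the induced transport maps, and let $\alpha'_-\in H_1(F^{-1}(a_+))$ and $\alpha'_+\in H_1(F^{-1}(a_-))$ be the representatives of $\alpha_-,\alpha_+$ obtained by transport without monodromy, exactly as in the proof of Proposition \ref{lemmaACampo}.

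The second step is the symmetry identity. Since $\alpha_-$ and $\alpha_+$ are vanishing cycles of the same type (both centers or both saddles), complex conjugation fixes $\alpha'_\pm$ up to a common sign ($+1$ for centers, $-1$ for saddles) and reverses the orientation of the fiber; the computation of Proposition \ref{lemmaACampo} then gives
\begin{equation*}
\langle \alpha'_-,(T_1+T_2)\alpha'_+\rangle=0.
\end{equation*}
I would emphasize here that this step is insensitive to the placement of $a_-,a_+$ relative to the origin, so the normalization $a_\pm=\pm(a+\varepsilon)$ used in the proposition is not needed.

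The third step is Picard--Lefschetz. The loop $\gamma_1\cdot\gamma_2^{-1}$ encircles exactly the critical values lying in $(a_-,a_+)$, so $T_1T_2^{-1}$ is the composition of the local monodromies $M_{a_0}$ around those critical values. Setting $x:=\big(T_1T_2^{-1}-\id\big)(T_2\alpha'_+)$, combining with the symmetry identity yields, as in the proposition,
\begin{equation*}
2\langle\alpha_-,\alpha_+\rangle=\langle\alpha'_-,T_1\alpha'_+\rangle-\langle\alpha'_-,T_2\alpha'_+\rangle=\langle\alpha'_-,x\rangle.
\end{equation*}
Since each $M_{a_0}$ acts by $\beta\mapsto\beta\pm\langle\beta,\alpha_l\rangle\alpha_l$ with $\alpha_l$ ranging over the vanishing cycles over $a_0$, the class $x$ lies in the span of the vanishing cycles attached to critical values in $(a_-,a_+)$.

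It then remains to show $x=0$ under either hypothesis. In the first case there are no critical values in $(a_-,a_+)$, so $\gamma_1\simeq\gamma_2$ rel endpoints, $T_1=T_2$, and $x=0$ at once. In the second case the hypothesis gives $\langle\alpha_+,\alpha_l\rangle=0$ for every intermediate vanishing cycle $\alpha_l$, and I would apply the monodromies one at a time, using that intersection numbers are invariant under transport. The hard part, and the only point requiring genuine care, is this propagation: a priori an early monodromy could move $T_2\alpha'_+$ into a class meeting a later $\alpha_l$. The resolution is an induction — the first monodromy fixes $T_2\alpha'_+$ precisely because the pairing vanishes, so the class is unchanged and the next monodromy again sees a vanishing pairing, fixing it as well; iterating shows $T_1T_2^{-1}$ fixes $T_2\alpha'_+$, so $x=0$. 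In both cases $2\langle\alpha_-,\alpha_+\rangle=\langle\alpha'_-,x\rangle=0$, which proves the corollary.
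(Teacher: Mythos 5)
Your argument is correct and is essentially the paper's approach: the paper disposes of this corollary in one line by invoking the formula \eqref{interformulasillacentro} of Proposition \ref{lemmaACampo} (an empty sum, or a sum whose terms all carry a factor $\langle\alpha_+,\alpha_l\rangle=0$), while you re-run the conjugation/Picard--Lefschetz derivation of that formula in the setting of several (or no) intermediate critical values. Your explicit inductive propagation of the vanishing pairings through the composition of local monodromies is a welcome clarification of a point the paper leaves implicit, but it is the same underlying argument, not a different route.
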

\begin{proof}
It is an immediate consequence of the formula given by the Equation \eqref{interformulasillacentro}.
\end{proof}
\noindent Another situation where two vanishing cycles do not intersect is when  they vanish at different critical points but with the same critical value. That is consequence of the definition of vanishing cycle as the unstable manifold of a vector field in a Lefschetz fibration (see for example \cite[\S 16]{seidel}\cite{DanielLopezquintic}). 
\begin{proposition}
\label{twocyclesonecriticalvalue}
Let $F:\p^2\dashedrightarrow \p^1$, with isolated singular points. Let $\alpha_1$ and $\alpha_2$ be two vanishing cycles in the  regular fiber $F^{-1}(b)$. Moreover, $\alpha_1$ vanishes in the critical point $p_1$ and $\alpha_2$ in the critical point $p_2$. If $p_1\neq p_2$ and $F(p_1)=F(p_2)=c$, then $\langle \alpha_1, \alpha_2\rangle=0.$
\end{proposition}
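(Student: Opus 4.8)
The plan is to exploit the local description of a vanishing cycle as the boundary of a Lefschetz thimble attached at its critical point, together with the fact that $p_1$ and $p_2$ are distinct points of the \emph{same} singular fiber $F^{-1}(c)$. The whole force of the hypothesis $F(p_1)=F(p_2)=c$ is that it lets me represent both cycles simultaneously inside a single fiber close to $c$, localized in disjoint neighborhoods of $p_1$ and $p_2$, and then transport them together to $F^{-1}(b)$ without any monodromy mixing them. So the strategy is to reduce $\langle\alpha_1,\alpha_2\rangle$ to the intersection of two manifestly disjoint loops and then invoke invariance of the intersection form under parallel transport.

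First I would fix the path $\gamma$ from $c$ to the reference value $b$ along which $\alpha_1$ and $\alpha_2$ are defined, choose a value $c'$ on $\gamma$ very close to $c$, and recall the local model. Since the singular points of $F$ are isolated, the conical/Milnor fibration structure at each $p_i$ provides a ball $B_i$ centered at $p_i$ such that, for $c'$ sufficiently close to $c$, the vanishing cycle $\alpha_i$ is represented in $F^{-1}(c')$ by a loop contained in $B_i\cap F^{-1}(c')$: it is precisely the cycle that contracts onto $p_i$ as $c'\to c$, i.e. the boundary of the unstable manifold at $p_i$ (cf. \cite[\S 16]{seidel}, \cite{DanielLopezquintic}). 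Because $p_1\neq p_2$ and these are isolated points, I may take $B_1$ and $B_2$ disjoint. With $B_1\cap B_2=\emptyset$, the representatives of $\alpha_1$ and $\alpha_2$ living in $F^{-1}(c')$ are disjoint loops, so their geometric, and hence algebraic, intersection number in $F^{-1}(c')$ is $0$.

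Finally I would transport back to $F^{-1}(b)$. Choosing the sub-path of $\gamma$ from $c'$ to $b$ so that it avoids all critical values, Ehresmann's theorem makes $F$ a locally trivial fibration over this segment; parallel transport is then a diffeomorphism $F^{-1}(c')\to F^{-1}(b)$ carrying the two local representatives to $\alpha_1$ and $\alpha_2$ and preserving the intersection form. Therefore $\langle\alpha_1,\alpha_2\rangle=0$ in $F^{-1}(b)$ as well, which is the claim.

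The main obstacle is the localization step: I must be certain that the vanishing cycle really can be confined to an arbitrarily small ball about its critical point in a nearby fiber, and, crucially, that working over a common critical value permits both confinements at once inside a \emph{single} fiber. For distinct critical values this argument breaks, since the two cycles would have to be carried to $F^{-1}(b)$ along different legs of a path system and could acquire intersections en route; it is exactly the equality $F(p_1)=F(p_2)$ that removes this difficulty and collapses the computation to two disjoint loops. A secondary point to verify is that the full Lefschetz thimbles, which are large and generally leave the balls $B_i$, are never used directly: all intersection data is read off in the near fiber $F^{-1}(c')$ and then transported, so any overlap of the thimbles away from the critical fiber is irrelevant to $\langle\alpha_1,\alpha_2\rangle$.
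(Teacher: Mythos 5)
Your proof is correct and takes essentially the same approach as the paper: both arguments exploit that $p_1\neq p_2$ lie over the \emph{same} critical value to exhibit the two vanishing cycles as disjoint representatives transported simultaneously to $F^{-1}(b)$ without monodromy, the paper phrasing this via the flow lines of a horizontal vector field (a point of $\alpha_1\cap\alpha_2$ would have to flow to both $p_1$ and $p_2$) rather than via disjoint Milnor balls in a nearby fiber $F^{-1}(c')$ followed by parallel transport.
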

\begin{proof}
A vanishing cycle is defined as the points in $F^{-1}(b)$ which are going to a critical point along the flow lines of a horizontal vector field $Y$ in $F^{-1}(\mathbb{P}^1)$. This vector filed projects to a vector field in $\mathbb{P}^1$, whose integral curve along $b$ goes to  $c$. A point in the intersection $\alpha_1\cap \alpha_2$ has to go to $p_1$ and $p_2$, but it is impossible by the continuity of the flow lines of a vector field.
\end{proof}

From now on, let $F:\p^2\dashedrightarrow\p$ be the rational map given by $F(x,y,z)=\frac{P(x,y,z)}{Q(x,y,z)}=\frac{\Pi_{k=0}^d R_k}{\Pi_{k=d+1}^{2d+1}R_k}$,  where $R_k=(2d+1-k)x+ky-k(2d+1-k)z.$  As before, the critical points and indeterminacy points of $F$ are in  the real part of the local chart $z=1$, and they are isolated.  Let  $b\in\C$ be a  regular value of $F$ with $\text{Im}(b)>0$, and we consider a system of paths joining $b$ with the critical values of $F$ such that these paths lie in the upper half plane. 

We can consider a distinguished set of generators of $H_1(F^{-1}(b),\Z)$, coming from  the vanishing cycles along the chosen paths. Namely, \{$\delta^P_{i}, \Delta^P_{j}, \delta^Q_{i}, \Delta^Q_{j}, \sigma_k$\} generate $H_1(F^{-1}(b),\Z)$, where 

\begin{itemize}
    \item The vanishing cycles $\delta^P_{i}$, $i=1,\dots,\frac{d(d+1)}{2}$ vanish  in points of self intersection of $P=0.$ These critical points are saddle points.
    
    \item The vanishing cycles $\Delta^P_{j}$, $j=1,\dots,\frac{d(d-1)}{2}$ vanish in critical points in the relatively compact components between the lines of $P=0.$ These critical points are local maxima or local minima.
    
    \item The vanishing cycles $\delta^Q_{i}$, $i=1,\dots,\frac{d(d+1)}{2}$ vanish in points of self intersection of $Q=0$. These critical points are saddle points.
    
    \item The vanishing cycles $\Delta^Q_{j}$, $j=1,\dots,\frac{d(d-1)}{2}$  vanish in critical points in the relatively compact components between the lines of $Q=0.$ These critical points are local maxima or local minima.
    
    \item The vanishing cycles $\sigma_k$, $k=1,\dots, d^2$   vanish in critical points in the relatively compact components between two lines of $P=0$ and two lines of $Q=0$. These critical points are saddle points.
\end{itemize}

\noindent Note that in  Example \ref{exam:lineas}, we call \textit{type 1 points} to the critical points associated with the vanishing cycles $\delta^P_{i}$ and $\delta^Q_{i}$, \textit{type 2 points} to the critical points associated with the vanishing cycles $\Delta^P_{j}$ and $\Delta^Q_{j}$, and \textit{type 3 points} to the critical points associated with the vanishing cycles $\sigma_k$. The vanishing cycles $\{\delta^P_{i}, \Delta^P_{j}, \delta^Q_{i}, \Delta^Q_{j}, \sigma_k\}$ generate $H_1(F^{-1}(b),\Z)$. Moreover, 
the Proposition \ref{maintheorem1} provides the intersection of some of these vanishing cycles, the original result  is  proved in \cite[\S 3.2)]{ACampo}.
\begin{proposition}
\label{maintheorem1}
Let $F:\p^2\dashedrightarrow\p$ be the rational map given by $F(x,y,z)=\frac{P(x,y,z)}{Q(x,y,z)}=\frac{\Pi_{k=0}^d R_k}{\Pi_{k=d+1}^{2d+1}R_k}$,  where $R_k=(2d+1-k)x+ky-k(2d+1-k)z.$ After choosing a proper orientation for the vanishing cycles,  the intersections of the vanishing cycle are

\begin{enumerate}
    \item $\langle\Delta^P_{j},\delta^P_{i}\rangle=1,$ if the point of self-intersection associated with $\delta^P_{i}$ is a vertex of the relatively compact component associated with $\Delta^P_{j}.$ Otherwise     $\langle\Delta^P_{j},\delta^P_{i}\rangle=0.$
    
    \item $\langle\Delta^Q_{j},\delta^Q_{i}\rangle=1,$ if the point of self-intersection associated with $\delta^Q_{i}$ is a vertex of the relatively compact component associated with $\Delta^Q_{j}.$ Otherwise     $\langle\Delta^Q_{j},\delta^Q_{i}\rangle=0.$
    
    \item $\langle\Delta^P_{j_1},\Delta^P_{j_2}\rangle= 1$ if the relatively compact component associate with $\Delta^P_{j_1}$ and the relatively compact component associate with $\Delta^P_{j_2}$ have a common edge. Otherwise $\langle\Delta^P_{j_1},\Delta^P_{j_2}\rangle=0$
    
    \item $\langle\Delta^Q_{j_1},\Delta^Q_{j_2}\rangle= 1$ if the relatively compact component associate with $\Delta^Q_{j_1}$ and the relatively compact component associate with $\Delta^Q_{j_2}$ have a common edge. Otherwise $\langle\Delta^Q_{j_1},\Delta^Q_{j_2}\rangle=0$
    
    \item $\langle\delta^P_{j_1},\delta^P_{j_2}\rangle=0$ and $\langle\delta^Q_{j_1},\delta^Q_{j_2}\rangle=0$.
    
    %\item $<\alpha_{k_1},\alpha_{k_0}>=0.$
   % \item It is zero in the other cases.
\end{enumerate}
\end{proposition}

\begin{proof}
Studying the self-intersection points in a neighborhood we can conclude the first two cases. Cases 3 and 4, where the intersection number is not zero, follows from the Proposition \ref{lemmaACampo}. When the associated relatively compact components do not have a common edge, then for a real regular value we can see that the cycles are disjoint. Case 5 follows from Proposition \ref{twocyclesonecriticalvalue}.
\end{proof}

\noindent The intersection matrix of the vectors $\{\delta^P_{i}, \Delta^P_{j}, \delta^Q_{i}, \Delta^Q_{j}, \sigma_k\}$ looks like
\begin{equation}
\label{intermatrixgeneral}
\Psi=\begin{pmatrix}
\Psi_P&*&*\\
*&\Psi_Q&*\\
*&*&*
\end{pmatrix},
\end{equation}
where $\Psi_P$ and $\Psi_Q$ are the intersection matrices associated with the polynomials $P$ and $Q$, respectively. The symbol $*$ means that the intersection numbers of these cycles are not given explicitly.
\begin{example}
\label{d=1Inter}
For $d=1$, $F(x,y,1)=\frac{x(2x+y-2)}{y(x+2y-2)}$ (Example \ref{d=1Homology}), the vanishing cycles are $\{\delta^P_1, \delta^Q_1, \sigma_1\}$, and they form a basis for $H_1(L_b)$. By Proposition \ref{maintheorem1}, the intersection matrix for the basis of the vanishing cycles is
$$\Psi=\begin{pmatrix}
0&*&*\\
*&0&*\\
*&*&0
\end{pmatrix}.$$
\end{example}
\noindent However by Corollary \ref{lemmaACampoCoro}, we conclude that  $\Psi$  is the zero matrix, which agrees with the fact that  $L_b$ is a sphere without 4 points. Namely, the homology $H_1(L_b)$ is generated by three circles around three of these points of indeterminacy. 
\begin{example}
\label{d=2Inter}
For $d=2$, $F(x,y,1)=\frac{x(4x+y-4)(3x+2y-6)}{(2x+3y-6)(x+4y-4)y}$ (Figure \ref{Linesd4}), the vanishing cycles are $$\delta_1^P, \delta_2^P, \delta_3^P, \Delta_1^P, \delta_1^Q, \delta_2^Q, \delta_3^Q, \Delta_1^Q, \sigma_1, \sigma_2, \sigma_3, \sigma_4,$$
	with critical values $0,0,0,c_1,\infty, \infty, \infty, c_2,c_3=1,c_4=1,c_5,c_6$, respectively. Note that the critical values satisfy $-\infty<c_2<c_5<c_6<c_1<0<c_3<\infty.$ By Corollary \ref{lemmaACampoCoro}, we conclude that the intersection  matrix for the vanishing cycles is
$$\Psi=\begin{pmatrix}
0&1&1&1&*&*&*&*&*&*&*&*\\
1&0&0&0&*&0&0&0&0&0&*&*\\
1&0&0&0&*&0&0&0&0&0&*&*\\
1&0&0&0&*&0&0&0&0&0&*&*\\
*&*&*&*&0&1&1&1&*&*&*&*\\
*&0&0&0&1&0&0&0&0&0&*&*\\
*&0&0&0&1&0&0&0&0&0&*&*\\
*&0&0&0&1&0&0&0&0&0&*&*\\
*&*&*&*&*&*&*&*&0&0&*&*\\
0&0&*&*&0&0&*&*&0&0&*&*\\
0&0&*&*&0&0&*&*&*&*&0&0\\
0&0&*&*&0&0&*&*&*&*&0&0
\end{pmatrix}.$$
\end{example}
As shown in the Examples \ref{d=1Inter} and \ref{d=2Inter}, under some assumptions about the critical values it is possible to use the Corollary \ref{lemmaACampoCoro} to determine several values in the intersection matrix that are zero. Although in general we do not completely determine the intersection matrix, as seen below, with the description in \eqref{intermatrixgeneral} it is enough.

\section{Monodromy problem}
\label{seccionOrbits}
In this section we compute the submodule of the 1-homology group of a regular fiber of $F$ generated by the monodromy action on a vanishing cycle. We show that the 1-homology group of the compact fiber  is generated by some specific vanishing cycles. Then, in Theorem \ref{maintheorem2} we prove that the orbit of the monodromy action on any vanishing cycle generates the 1-homology of the compact fiber. This result is the equivalent of the monodromy problem for a rational map given by the quotient of the products of lines in general position.\\

\noindent As we know, for a regular value $b$ the fiber $L_b$ is a Riemann surface of genus  $g(L_b)=\frac{d(d-1)}{2}$ without the $(d+1)^2$ points of indeterminacy $N=\{P=Q=0\}$. We can define the compact fiber $\overline{L_b}:=L_b\cup N.$ Thus, $\overline {L_b}$ is a Riemann surface  of genus $g(L_b)$. Consequently, we have 
$$
    \dim (H_1(L_b))=d(2d+1), \qquad
    \dim (H_1(\overline{L_b}))=d(d-1).
$$
For $H_1(L_b)$ we can consider a basis given by a basis of $H_1(\overline{L_b})$ together with small circles around of the points of $N$. Let $I$ be the subspace of dimension $(d+1)^2-1$ generated by the cycles around of points of $N$, then $H_1(L_b)=H_1(\overline{L_b})\oplus I.$ From the definition of $I$ it follows that
\begin{equation}
\label{CaracterizacionI}
    I=\{\delta\in H_1(L_b)\text{ }|\text{ }\langle \delta, \delta'\rangle=0, \text{ }\forall \delta'\in H_1(L_b)\}.
\end{equation}
%Hence, the vanishing cycles $\sigma_k$, $k=1,\ldots, d^2$ associated with the type 3 points are in $I$. We define as in \cite{hosseincenterlog}, the cycles $$\delta^{R_k}=\sum_{j=1}^d(-1)^j \delta_j^{R_k}, \text{ }k=0,\ldots, 2d+1,$$
%where $\delta_j^{R_k}$ are the vanishing cycles associated with the saddle points of $F$ in the line $R_k$. By definition $\delta^{R_k}\in I$. Furthermore, it is not difficult to see that after choosing a proper sign for  $\delta^{R_k}$,
%\begin{align*}
 %   \sum_{k=0}^d\delta^{R_k}=0, \text{ and }\sum_{k=d+1}^{2d+1}\delta^{R_k}=0.
%\end{align*}
%Moreover, the cycles $\{\sigma_i, \delta^{R_j}, \delta^{R_k}\}$, $i=1,\ldots, d^2$, $j=1,\ldots, d$, $k=d+1,\ldots, 2d$, generate $I$.
\begin{proposition}
\label{baseparalacompacta}
The vanishing cycles $\{\delta^P_i, \Delta_j^P\}$, $i=1,\ldots , \frac{d(d+1)}{2}$, $j=1,\ldots, \frac{d(d-1)}{2}$ generate $H_1(\overline{L_b}).$
\end{proposition}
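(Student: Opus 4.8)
The plan is to push the problem forward along the surjection induced by the inclusion $\iota\colon L_b\hookrightarrow \overline{L_b}=L_b\cup N$, and then to close it with a dimension count resting on A'Campo's description \cite{ACampo} of the vanishing cycles of the arrangement $\{P=0\}$.

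First I would record that $\iota$ induces a surjection $\iota_*\colon H_1(L_b)\twoheadrightarrow H_1(\overline{L_b})$ with $\ker\iota_*=I$, which is precisely the splitting $H_1(L_b)=H_1(\overline{L_b})\oplus I$ recalled above ($I$ being generated by the loops around the points of $N$). Writing $V_P=\langle \delta^P_i,\Delta^P_j\rangle\subseteq H_1(L_b)$, the proposition is equivalent to the single statement $\iota_*(V_P)=H_1(\overline{L_b})$. Indeed, once this is known the images of the remaining generators $\delta^Q_i,\Delta^Q_j$ of $H_1(L_b)$ land inside $\iota_*(V_P)$ and are therefore redundant, while the $\sigma_k$ already lie in $I=\ker\iota_*$.

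To compute $\dim\iota_*(V_P)=\dim V_P-\dim(V_P\cap I)$, I would use that $\{\delta^P_i,\Delta^P_j\}$ is exactly A'Campo's system of vanishing cycles for a real morsification of $\{P=0\}$: the $\binom{d+1}{2}$ cycles $\delta^P_i$ sit at the nodes of the $d+1$ lines and the $\binom{d}{2}$ cycles $\Delta^P_j$ at their bounded regions. On the smoothing of this arrangement, a curve of genus $g=\binom{d}{2}$ with $d+1$ punctures at the points at infinity of the lines, these $d^2=2g+(d+1)-1$ cycles form a basis of $H_1$ by \cite{ACampo}. Filling in the $d+1$ punctures to obtain the compact curve $\overline{L_b}=\{P=bQ\}$ kills exactly the $(d+1)-1=d$ puncture-loop directions, which are represented by the classes $\delta^{R_0},\dots,\delta^{R_d}$ (subject to $\sum_{k=0}^{d}\delta^{R_k}=0$) and all lie in $I$. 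Hence $\dim(V_P\cap I)=d$ and $\dim\iota_*(V_P)=d^2-d=d(d-1)=\dim H_1(\overline{L_b})$, which forces $\iota_*(V_P)=H_1(\overline{L_b})$.

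The step I expect to be the main obstacle is precisely the identification $V_P\cap I=\langle \delta^{R_0},\dots,\delta^{R_d}\rangle$, i.e.\ that the node and region cycles acquire no relations in $\overline{L_b}$ beyond the $d$ forced by the $\delta^{R_k}$. Equivalently, one must show with the intersection data of Theorem \ref{maintheorem1} that $\Psi_P$ has rank $2g$ modulo its radical, so that the region cycles $\Delta^P_j$ supply the half of $H_1(\overline{L_b})$ transverse to the isotropic span of the node cycles $\delta^P_i$, rather than collapsing into it. This transversality is what makes the $d^2$ cycles $\{\delta^P_i,\Delta^P_j\}$ cut out all of $H_1(\overline{L_b})$ and not merely a Lagrangian half.
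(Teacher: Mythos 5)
Your overall strategy --- split $H_1(L_b)=H_1(\overline{L_b})\oplus I$, push the $P$-cycles forward, and win by a dimension count --- is the same as the paper's, but your way of organizing the count leaves the decisive step open, as you yourself acknowledge. The paper avoids your two separate claims ($\dim V_P=d^2$ and $V_P\cap I=\langle\delta^{R_0},\dots,\delta^{R_d}\rangle$) by a more economical observation: since $I$ is exactly the radical of the intersection form on $H_1(L_b)$ (equation \eqref{CaracterizacionI}), the Gram matrix of the images $\phi(\delta^P_i),\phi(\Delta^P_j)$ in $H_1(\overline{L_b})$ is literally the block $\Psi_P$ of Theorem \ref{maintheorem1}; hence $\dim\iota_*(V_P)\geq\operatorname{rank}\Psi_P$, and one only needs $\operatorname{rank}\Psi_P=d(d-1)=\dim H_1(\overline{L_b})$. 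That single rank statement is the standard fact behind A'Campo's description of the Milnor lattice of an ordinary $(d+1)$-fold point (the corank equals the number of branches minus one, i.e.\ $d$), and it is what the paper invokes. In your version the same fact appears as ``the main obstacle'', but you do not close it, so as written the proof is incomplete exactly where it matters.

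Two further points. First, your intermediate claim $\dim V_P=d^2$ is not free: A'Campo gives a basis of $H_1$ of the Milnor fiber of $P$, but your cycles live in $H_1(L_b)$ for the different curve $\{P=bQ\}\setminus N$, and Example \ref{exam:lineas} shows that the full set of $3d^2$ vanishing cycles satisfies $d(d-1)$ relations there; you would have to argue that none of these relations is supported on $V_P$ alone. The Gram-matrix route makes this irrelevant. Second, $\overline{L_b}$ is obtained from $L_b$ by filling in the $(d+1)^2$ points of $N$, not $d+1$ punctures; the ``$d+1$ punctures'' picture belongs to the affine curve $P(x,y,1)=c$ of \cite{hosseincenterlog}, and transplanting it verbatim conflates the two settings (each line $R_k$ of $P$ now carries $d+1$ indeterminacy points). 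Your conclusion $\dim(V_P\cap I)=d$ may well be correct, but it has to come from the rank computation, not from the puncture count.
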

\begin{proof}
Let denote by $\{\alpha_i\}$, $i=1,\ldots, d(d-1)$ a basis of $H_1(\overline{L_b})$ and $\{\beta_j\}$,    $j=1,\ldots, (d+1)^2-1$ a basis of $I$. Thus, any vanishing cycles can be written as a linear combination of $\alpha_i$ and $\beta_j$. From \eqref{CaracterizacionI} we have that the intersection between two vanishing cycles depends on only of the coefficients that appear accompanying the $\alpha_i$'s. Therefore, $$\langle \phi(\delta^P_i), \phi(\Delta^P_j)\rangle=\langle \delta^P_i, \Delta^P_j\rangle,
\qquad \langle \phi(\Delta^P_i), \phi(\Delta^P_j)\rangle=\langle \Delta^P_i, \Delta^P_j\rangle,$$
where  $\phi$ is the map $H_1(L_b)\xto{\phi} H_1(\overline{L_b})$  which takes the cycles in $I$ to zero in $H_1(\overline{L_b})$. Consequently, the submatrix $\Psi_P$ of the intersection matrix $\Psi$ is an intersection matrix of $H_1(\overline{L_b})$. Moreover, the rank of $\Psi_P$ is $d(d-1)$. Hence, there are $d(d-1)$ vanishing cycles such that their images via the map $\phi$ are linearly independent in $H_1(\overline{L_b})$. 
\end{proof}

\begin{remark}
The Proposition \ref{baseparalacompacta} is also true for the vanishing cycles $\{\delta^Q_i, \Delta_j^Q\}$, $i=1,\ldots , \frac{d(d+1)}{2}$, $j=1,\ldots, \frac{d(d-1)}{2}$. On the other hand, we can consider an order in $\delta_i^P$, $i=1, \ldots, \frac{d(d+1)}{2}$ as follows: In $z=1$, on the first line $R_0$, we establish $\delta_1^P, \delta_2^P,\ldots, \delta_d^P$ being the intersection of $R_0$ with $R_d, R_{d-1}, \ldots, R_1$, respectively. Then, we continue with the line $R_1$, thus the cycles $\delta_{d+1}^P,\ldots, \delta_{2d-1}^P$ are the intersections of $R_1$ with $R_d, \ldots, R_{2}$, and so on.  A basis of $H_1(\overline{L_b})$ can be given by considering the vanishing cycles $\{\delta^P_i, \Delta_j^P\}$, $i=1,\ldots , \frac{d(d+1)}{2}$, $j=1,\ldots, \frac{d(d-1)}{2}$ without the cycles $\delta_d^P, \delta_{2d-1}^P,\ldots, \delta_{kd-\frac{k(k-1)}{2}}^P\ldots, \delta^P_{\frac{d(d+1)}{2}}$. In fact, the number of elements in this set of vanishing cycles is $\frac{d(d+1)}{2}+\frac{d(d-1)}{2}-d=d(d-1)=\dim H_1(\overline{L_b})$. Moreover,  using Gauss-Jordan elimination in the intersection matrix $\Psi_P$, it is possible  to show that these vectors are linearly independent.
\end{remark}

As we mentioned before, the monodromy problem for genetic lines in $\C^2$ is studied in \cite{hosseincenterlog}, where
H. Movasati presents the next result,
\begin{theorem}[H. Movasati]
	\label{monodromyHossein}
	Let $f$ be a polynomial given by $$f(x,y):=P(x,y,1)=\Pi_{k=0}^d (2d+1-k)x+ky-k(2d+1-k),$$ for any regular value $b$, the action of the monodromy on any  vanishing cycle generates the homology $H_1(\overline{f^{-1}(b)})$. Here, $\overline{f^{-1}(b)}=f^{-1}(b)\cup \{(d+1) \text{ points of indeterminacy}\}$.
\end{theorem}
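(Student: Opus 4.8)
The plan is to identify the monodromy group of $f$ with the group generated by the Picard--Lefschetz transvections along the vanishing cycles, and then to reduce the assertion to a connectivity property of their intersection graph.

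I would first isolate the elementary mechanism behind the transvections. By the Picard--Lefschetz formula, the monodromy $T_\ell$ around the critical value carrying a vanishing cycle $\delta_\ell$ acts on $\overline{H}:=H_1(\overline{f^{-1}(b)})$ by $x\mapsto x\pm\langle x,\delta_\ell\rangle\delta_\ell$. Fix a vanishing cycle $\delta$ and let $V$ be the subspace spanned by its orbit under the whole monodromy group; then $V$ is monodromy--invariant by construction. The key observation is that if $\delta_i\in V$ and $\langle\delta_i,\delta_j\rangle\neq 0$, then $T_j(\delta_i)-\delta_i=\pm\langle\delta_i,\delta_j\rangle\,\delta_j$ already lies in $V$, so $\delta_j\in V$. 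Thus $V$ is stable under passing to an adjacent vertex in the graph $\Gamma$ whose vertices are the vanishing cycles $\{\delta_i^P,\Delta_j^P\}$ and whose edges join pairs with nonzero intersection number.

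Next I would bring in the two ingredients already available. Lefschetz theory (Section~\ref{SectionHomology}) shows that the vanishing cycles generate $H_1(f^{-1}(b))$, and projecting modulo the radical of the intersection form --- the subspace spanned by the small loops around the $d+1$ points at infinity, exactly the kernel considered in the proof of Proposition~\ref{baseparalacompacta} --- their images generate $\overline{H}$, on which the pairing is the nondegenerate symplectic form. Since this projection preserves intersection numbers, the graph $\Gamma$ and the transvection action descend to $\overline{H}$ without change. Consequently, granting that (a) no vanishing cycle lies in the radical, so that each projects to a nonzero class, and (b) the graph $\Gamma$ is connected, the transvection observation propagates membership in $V$ to every vertex, whence $V=\overline{H}$ for the orbit of an arbitrary single vanishing cycle. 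This is precisely the statement.

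The remaining and genuinely substantial point is to verify (a) and (b) for the specific arrangement $\{R_k=0\}_{k=0}^d$, and this is where I expect the main difficulty to lie, since it depends on the real geometry of these particular lines rather than on formal homological algebra. Here I would use the intersection numbers recorded in Theorem~\ref{maintheorem1}: $\langle\Delta_j^P,\delta_i^P\rangle=1$ exactly when the self--intersection point of $\delta_i^P$ is a corner of the bounded region of $\Delta_j^P$, and $\langle\Delta_{j_1}^P,\Delta_{j_2}^P\rangle=\pm1$ exactly when the two regions share an edge. Because the line $R_k$ has intercepts $(k,0)$ and $(0,2d+1-k)$, the lines $R_1,\dots,R_d$ form a fan that, together with the axis $R_0=\{x=0\}$, confines all intersection points and all bounded regions to a single connected polygonal cluster in the first quadrant. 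I would then argue, following A'Campo's combinatorial model \cite{ACampo}, that every intersection point is a corner of some bounded region (giving (a)) and that the adjacency of bounded regions through shared edges, supplemented by their incidence with corner vertices, makes $\Gamma$ connected (giving (b)); the small cases $d=2,3$ already exhibit this pattern and serve as a check. With (a) and (b) in hand, the transvection argument of the previous paragraphs completes the proof.
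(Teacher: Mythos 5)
The paper does not actually prove this statement: it is imported verbatim from \cite{hosseincenterlog} (Movasati's Theorem 2.3), so there is no in-paper proof to compare against. Judged on its own terms, your proposal contains a genuine gap at its central step. The ``key observation'' $T_j(\delta_i)-\delta_i=\pm\langle\delta_i,\delta_j\rangle\,\delta_j$ presupposes that the monodromy around the critical value carrying $\delta_j$ is a \emph{single} Picard--Lefschetz transvection, i.e.\ that each critical value supports exactly one critical point. For $f=\prod_{k=0}^d R_k$ this fails badly: all $\tfrac{d(d+1)}{2}$ saddle points (the pairwise intersections of the lines) lie on the single fiber $f^{-1}(0)$, so the only element of the monodromy group available at that value is the product $T_0=\prod_i T_{\delta_i^P}$ of all the commuting saddle transvections. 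Applying it to a cycle $x$ in your invariant subspace $V$ yields only $\sum_i\pm\langle x,\delta_i^P\rangle\,\delta_i^P\in V$, a fixed linear combination of several saddle cycles, and nothing in your argument isolates the individual $\delta_i^P$ from this sum. Consequently the reduction of the theorem to connectivity of the intersection graph $\Gamma$ does not go through; connectivity of $\Gamma$ is necessary but not sufficient when critical values collide, which is exactly why the monodromy problem for products of lines is nontrivial and why Movasati's proof proceeds by an explicit analysis of the orbit in the Dynkin diagram (and an induction on $d$) rather than by the transvection-propagation argument valid for polynomials with distinct critical values à la Ilyashenko.

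Your remaining ingredients are essentially sound --- the vanishing cycles do generate $H_1(f^{-1}(b))$ (indeed they form a basis, since $d^2$ critical points match $\dim H_1(f^{-1}(b))=d^2$), the radical is spanned by the combinations $\delta^{R_k}=\sum_j(-1)^j\delta_j^{R_k}$ so no single vanishing cycle projects to zero, and the adjacency structure you describe from Theorem \ref{maintheorem1} is correct --- but to complete the proof you would need an additional mechanism for extracting individual saddle cycles (or at least enough independent combinations of them) from the images of $T_0-\mathrm{id}$ applied to the various center cycles $\Delta_j^P$ and their monodromy translates. As written, the proposal establishes only that $V$ contains all the center cycles reachable through center-to-center adjacencies together with certain prescribed sums of saddle cycles, which is weaker than the claim.
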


\noindent For a generic $\frac{P(x,y,z)}{Q(x,y,z)}$ rational map, the monodromy action is transitive [Cor. 3.1.2.]\cite{hosseininonthetopology}. The map $F(x,y,z)=\frac{\Pi_{k=0}^d R_k}{\Pi_{k=d+1}^{2d+1}R_k}\text{ , where }R_k=(2d+1-k)x+ky-k(2d+1-k)z$ is not generic, however, the intersection matrix does not change for small perturbations of $F$. Thus, given a vanishing cycle $\delta$ and $\Delta^P_{j_0}$ with critical value $c_{j_0}$,  some element in the subspace generated by the orbit of $\delta$  intersects some  vanishing cycles associated with the vertices of the relatively compact component associated with $\Delta^P_{j_0}$. Since  $c_{j_0}$ is different to the other critical values, then  the subspace generated by the orbit of the monodromy action on $\delta$ contains $\Delta_{j_0}^P$. Therefore, any vanishing cycle contains all the vanishing cycles $\Delta^P_j$, $j=1,\ldots, \frac{d(d-1)}{2}$.

%Namely, given a vanihing cylce $\delta$ and $\Delta^P_{j_0}$ with critical value $c_{j_0}$,  some element in the subspace generated by the orbit of $\delta$  intersect some  vanishing cycles associated with the vertex of the relatively compact component associated with $\Delta^P_{j_0}$, because $c_{j_0}$ is different to the other critical values we can consider the monodromy around it.

The goal in this section is to conclude that the subspace generated by the orbit of the monodromy action on any vanishing cycle  is $H_1(\overline{L_b})$.  According to Proposition \ref{baseparalacompacta} and the discussion of the previous paragraph, in order to conclude that, it is enough to restrict our attention to the vanishing cycles $\{\delta_i^P, \Delta_j^P\}$, $i=1,\ldots, \frac{d(d+1)}{2}$, $j=1, \ldots, \frac{d(d-1)}{2}$. With this observation, we are in the case of  
 Theorem \ref{monodromyHossein}.
\begin{theorem}
\label{maintheorem2}
Let  
$F:\p^2\dashedrightarrow\p$  be a rational map given by $$F(x,y,z)=\frac{P(x,y,z)}{Q(x,y,z)}=\frac{\Pi_{k=0}^d (2d+1-k)x+ky-k(2d+1-k)z}{\Pi_{k=d+1}^{2d+1}(2d+1-k)x+ky-k(2d+1-k)z},$$ for any regular value $b$ of $F$, the action of the monodromy on any vanishing cycle  generates the homology $H_1(\overline{L_b})$.
\end{theorem}
\begin{proof}
From the intersection matrix given by \eqref{intermatrixgeneral}, and the Picard-Lefschetz formula, we conclude that the monodromy action on $H_1(f^{-1}(b))$ in Theorem \ref{monodromyHossein}, is the same as the monodromy action on the subspace of $H_1(L_b)$ generated by $\{\delta^P_i, \Delta_j^P\}$. By Proposition \ref{baseparalacompacta}, we know that this subspace is $H_1(\overline{L_b})$. Finally we conclude the proof,  observing that $\dim(H_1(\overline{f^{-1}(b)}))=\dim(H_1(\overline{L_b}))$ and using Theorem \ref{monodromyHossein}.
\end{proof}

\section{Brieskorn lattices/Petrov Modules}
\label{Brieskorn}
So far we have developed the topological aspects associated with the fibration  $F:\p^2\dashedrightarrow\p$  given by  given  by $F(x,y,z)=\frac{P(x,y,z)}{Q(x,y,z)}=\frac{\Pi_{k=0}^d R_k}{\Pi_{k=d+1}^{2d+1}R_k}$,  where $R_k=(2d+1-k)x+ky-k(2d+1-k)z.$ In this section we study the algebraic aspect coming from the Brieskorn modules and Gauss-Manin connection associated with $F$. The main aim of this section is describe the kernel of the Gauss-Manin connection for $F$.\\

\noindent Since we are working in the projective space $\p^2$, it is necessary to introduce the Brieskorn modules with poles as in  \cite{yadollahBrieskorn}. We denote by $D$ the fiber over infinity, that is $D=F^{-1}(\infty)=\{Q=0\}$. Let $\Omega^i(*D)$ be the set of rational $i$-forms with pole along  $D$. Let $t\in \C=\p^1\setminus\{\infty\}$ be the affine coordinate. The set $\Omega^i(*D)$ is a $\C[t]-$module in the following sense  $$p(t)\cdot \omega=p(F)\omega, \hspace{4mm}\omega\in \Omega^i(*D)\text{, }p\in \C[t].$$ The Brieskorn lattice/Petrov module is defined as $$H=\frac{\Omega^1(*D)}{d\Omega^0(*D)+dF\wedge\Omega^0(*D)},$$ it is a $C[t]-$module. Moreover, we define $$V=\frac{\Omega^2(*D)}{dF\wedge \Omega^1(*D)}.$$ According to \cite{yadollahBrieskorn, Hosseincoh}, $V$ is a $\C-$vector space of finite dimension. Let $A$ be the linear map associated with the multiplication by $F$ in $V$, and let $p_A$ be the minimal polynomial of $A$. Let $\tilde C=\{c_0=0, c_1=1,c_2,\ldots, c_{2d(d-1)}\}$ be the set of non infinite critical values of $F$, and let $\tilde H$ be the localization of $H$ by polynomials in $t$ with zeros on $\tilde C$, i.e., $$\tilde H=\bigg\{\frac{\omega}{s(t)}\text{ }\bigg|\text{ }\omega\in H\text{, }s\in \C[t]\text{, }Z(s)\subset \tilde C\bigg\}.$$ 
The \textit{Gauss-Manin} connection $\nabla:H\to \tilde H$ is defined  equivalently as in \cite{hosseincenterlog}. If $\omega \in H$, then $d\omega \in V$ and  $p_A(F)d\omega=0$ in $V$. Therefore, there exists $\eta\in \Omega^1(*D)$ such that 
$$p_A(F)d\omega=dF\wedge \eta.$$
Thus, we define $\nabla \omega:=\frac{\eta}{p_A(F)}.$ The well definition of $\nabla$ follows directly from \cite{hosseincenterlog}. Note that the polynomial $p_A=p_A(t)$, where $t$ as before.  Moreover, it is possible to extent $\nabla$ to $\tilde H$ by using the rule $$\nabla\left(\frac{\omega}{s(t)}\right)=\frac{s(t)\nabla \omega-\frac{\partial s}{\partial t}\omega}{s^2}.$$
As in \cite{hosseincenterlog}, there is an isomorphism of $\tilde H$, with the differential forms with poles along the irreducible components of all critical fibers of $F$ other than the fiber at the infinity. Moreover, let $b$ be a regular value of $F$ and let  $\delta_t\subset F^{-1}(t)$ be a continuous family of cycles with $t\in (\C, b)$. For $\omega \in \tilde H$, if $\int_{\delta_t}\omega$ is well-defined, then (see \cite{singularnold},\cite{hosseincenterlog})
\begin{equation}
\label{relaciontopoalg}
\frac{d}{dt}\int_{\delta_t}\omega=\int_{\delta_t}\nabla \omega.
\end{equation}
Coming up next we compute the kernel of $\nabla^2=\nabla \circ \nabla$. This computation is a consequence of Proposition \ref{closeform}, which is a classic result in holomorphic foliation, see for example
\cite[Prop. 2.5.11]{NetoScarduabook}.

\begin{proposition}
	\label{closeform}
	Let $\omega\neq 0$ be a meromorphic closed $1-$form in $\p^n$. Then 
	$$\omega=\sum_{j=1}^k \lambda_j \frac{df_j}{f_j}+d\left(\frac{g}{f_1^{r_1-1}\cdots f_{k}^{r_k-1}}\right),$$
	where
	\begin{enumerate}
		\item $k\geq 2$ and $f_1,\ldots, f_k, g$ are homogeneous polynomials in $\C^{n+1}.$
		\item $f_1,\ldots, f_k$ are irreducible.
		\item $\lambda_1, \ldots, \lambda_k\in \C$ and $\sum_{j=1}^k\lambda_j \deg(f_j)=0.$
		\item If $r_j>1$, then $f_j$ does not divide $g$.
		\item If $r_j=1$, then $\lambda_j\neq 0$.
		\item $\deg(g)=\deg(f_1^{r_1-1}\cdots f_k^{r_k-1}).$
		\item The set of poles of $\omega$ is  $\bigcup_{j=1}^k\{f_j=0\}$. Furthermore, $r_j$ is the order of $\{f_j=0\}$ as pole of $\omega$.
	\end{enumerate}
\end{proposition}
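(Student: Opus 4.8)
The plan is to descend the statement from $\p^n$ to the affine cone $\C^{n+1}$, peel off the residue (logarithmic) part by a global residue computation, show that what remains is the differential of a rational function, and finally read off the numerical conditions from the Euler contraction relation that characterizes forms coming from $\p^n$.

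First I would lift $\omega$ to a homogeneous closed meromorphic $1$-form $\tilde\omega$ on $\C^{n+1}$ via the standard dictionary, so that $\tilde\omega$ is pinned down by the Euler condition $\iota_R\tilde\omega=0$, where $R=\sum_i x_i\partial_{x_i}$ is the radial vector field. The polar locus of $\tilde\omega$ is a finite union of irreducible hypersurfaces; I let $f_1,\ldots,f_k$ be the reduced homogeneous equations cutting them out, which fixes the data in items (1), (2), (7). For each component I would set $\lambda_j\in\C$ to be the residue of $\tilde\omega$ along a smooth point of $\{f_j=0\}$. Because $\tilde\omega$ is closed and the smooth locus of an irreducible hypersurface is connected, this residue is a well-defined constant, independent of the chosen point.

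The core step is to analyze $\eta:=\tilde\omega-\sum_{j=1}^k\lambda_j\frac{df_j}{f_j}$, which is again closed and, by construction, has vanishing residue along every polar component. I would then argue that a closed meromorphic $1$-form on the complement $U=\C^{n+1}\setminus\bigcup_j\{f_j=0\}$ with all residues zero is exact: by Grothendieck's algebraic de Rham theorem the meromorphic closed forms compute $H^1(U;\C)$, and $H^1(U;\C)$ is spanned by the logarithmic classes $\left[\frac{df_j}{f_j}\right]$ (as $\C^{n+1}$ is contractible, the residue map identifies $H^1(U;\C)$ with $\C^k$). Hence the vanishing of every period of $\eta$ forces $[\eta]=0$, so $\eta=dh$ for a meromorphic $h$ on $U$. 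Being meromorphic on $\C^{n+1}$ with poles only along the $f_j$, this $h$ is rational of the shape $h=g/\!\left(f_1^{r_1-1}\cdots f_k^{r_k-1}\right)$, which yields the stated form of $\omega$. This exactness claim is the main obstacle: one either invokes the (nontrivial) identification of $H^1$ of a hypersurface complement with the span of the logarithmic forms, or one proves it by hand through an inductive local reduction of the pole order, showing that near a smooth point of $\{f_j=0\}$ a closed germ with a pole of order $\geq 2$ and zero residue is $d$ of a meromorphic germ of strictly lower pole order, and then patching these local primitives compatibly using closedness.

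It remains to extract the numerical conditions. Applying $\iota_R$ and using Euler's identity $\iota_R\,df_j=\deg(f_j)\,f_j$ gives $\iota_R\frac{df_j}{f_j}=\deg(f_j)$, while $\iota_R\,dh=R(h)=0$ precisely because $h$ is homogeneous of degree $0$; the latter is equivalent to $\deg(g)=\deg\!\left(f_1^{r_1-1}\cdots f_k^{r_k-1}\right)$, which is item (6). Since $\iota_R\tilde\omega=0$, collecting terms produces $\sum_{j=1}^k\lambda_j\deg(f_j)=0$, which is item (3); as the degrees are positive, a single nonzero residue cannot satisfy this relation, so the presence of a genuine logarithmic part forces $k\geq 2$, giving item (1). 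Finally I would normalize the representative: if $f_j\mid g$ while $r_j>1$ one cancels the common factor and lowers $r_j$ (item (4)), and if some index has $r_j=1$ and $\lambda_j=0$ then $\{f_j=0\}$ is not actually a pole and is dropped from the list (item (5)), so that the polar divisors and their orders are exactly as recorded in item (7).
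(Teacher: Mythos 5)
The paper does not actually prove this proposition: it is invoked as a classical structure theorem for closed meromorphic $1$-forms on $\p^n$ and attributed to the cited book of Lins Neto and Sc\'ardua (Prop.\ 2.5.11), so there is no in-paper argument to compare yours against. Your sketch is essentially the standard proof of that classical result, and its skeleton is sound: pull back to the cone, subtract the residual logarithmic part, use that $H_1$ of the complement of $V(f_1\cdots f_k)$ in $\C^{n+1}$ is freely generated by meridians of the irreducible components (so vanishing residues kill all periods), invoke Grothendieck's comparison theorem --- or the elementary pole-order-reduction argument you mention, which is closer to how the cited source proceeds --- to produce a rational primitive $h\in\C[x_0,\dots,x_n][1/(f_1\cdots f_k)]$, and then extract the numerical constraints from $\iota_R$.

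Two points need tightening. First, you assert rather than derive that $h$ is homogeneous of degree $0$. The clean route is to observe that $\eta=\tilde\omega-\sum_j\lambda_j\,df_j/f_j$ is invariant under the $\C^*$-action, so writing $h=\sum_m h_m$ in graded pieces forces $dh_m=0$ for $m\neq 0$ and hence $h=h_0$ up to an irrelevant constant; then $\iota_R\tilde\omega=0$ decomposes by degree into $R(h)=0$ (which is item (6)) and the constant part $\sum_j\lambda_j\deg(f_j)=0$ (item (3)). Second, your argument for item (1) only yields $k\geq 2$ when some $\lambda_j\neq 0$, and in fact that is all that can be true: $\omega=d(g/f)$ with $f$ irreducible, $f\nmid g$ and $\deg g=\deg f$ is a nonzero closed meromorphic $1$-form on $\p^n$ with a single polar component. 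So the unconditional claim $k\geq 2$ should either carry the hypothesis that the residual part is nontrivial, or be replaced by the correct unconditional statement that the polar divisor has degree $\sum_j r_j\deg(f_j)\geq 2$. This is a defect of the statement as transcribed rather than of your proof; everything else in your proposal (well-definedness of the residues by closedness and connectedness of the smooth locus, the normalization giving items (4), (5) and (7)) is correct.
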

\noindent  The fiber over the critical value $c_0=0$ is reducible, namely $R_0R_1\cdots R_d=0$. Also, the fiber over the critical value $c_1=1$ is reducible; because $P-Q=(x-y)S(x,y,z)$, with $\deg(S)=d$. The other fibers of $F$ over $\C=\p^1\setminus \infty$ are irreducible. We can consider a perturbation of $F$ such that the fiber over $c_1=1$ is irreducible. For example, if we consider $\tilde R_{2d+1}=(2d+1)y+\varepsilon$ for a $\varepsilon>0$ small enough, then $\tilde F(x,y,z)=\frac{R_0R_1\cdots R_d}{R_{d+1}R_{d+2}\cdots \tilde R_{2d+1}}$ has reducible fibers only over $c_0=0$ and over $\infty$. From here on, to avoid burdening the notation, we denote by $F$ said perturbation $\tilde F$. Note that intersection matrix associated with this new $F$ does not change, therefore the results in \S \ref{seccionIntMatrix} and \S \ref{seccionOrbits} still hold for this $F$.

In $\tilde H$ we consider the subset generated by the  logarithmic forms with pole along $D$ %$$\mathcal{L}=\bigg\{\omega \in \tilde H\text{ }\bigg|\text{ }\omega=\sum_{j=0}^{2d+1}\lambda_j\frac{dR_j}{R_j}+\gamma_1\frac{d(x-y)}{x-y}+\gamma_2\frac{dS}{S},\hspace{1mm} \lambda_j, \gamma_l\in \C,\hspace{1mm}\sum_{j=0}^{2d+1}\lambda_j+\gamma_1+d\gamma_2=0\bigg\}.$$ 
$$\mathcal{L}=\bigg\{\omega \in \tilde H\text{ }\bigg|\text{ }\omega=\sum_{k=0}^{d}\lambda_j\frac{dR_k}{R_k}+\lambda_{d+1}\frac{dQ}{Q},\hspace{1mm}  \lambda_k\in \C,\hspace{1mm}\sum_{k=0}^{d}\lambda_k+(d+1)\lambda_{d+1}=0\bigg\}.$$
The condition $\sum_{k=0}^{d}\lambda_k+(d+1)\lambda_{d+1}=0,$ ensures that $\omega$ defines a foliation in $\p^2$. Namely, for $\omega$ to define a foliation in $\p^2$ it is necessary that $\imath_{E} \omega=0$ where $E=x\frac{\partial}{\partial x}+y\frac{\partial}{\partial y}+z\frac{\partial}{\partial z}$ is the radial vector field  \cite{AlcidesHolFol2AlgCurve}. Since $\imath_E(dR_j)=R_j$ and $\imath_E(dQ)=(d+1)Q$, $\imath_E(\omega)=\sum_{j=0}^{d}\lambda_j+(d+1)\lambda_{d+1}$. On the other hand, note that $\sum_{j=0}^d \lambda_j\frac{dR_j}{R_j}$ corresponds to the irreducible components of the fiber $F=0$. Since for any  $0\neq c_i\in \tilde C$ the polynomial $P-c_iQ$ is irreducible, the logarithmic forms associated with other fiber of $F$ are 0 in $\tilde H$.
\begin{proposition} 
	\label{kernelnabla2}
If $\omega \in \ker(\nabla^2)$, then $\omega=F\eta_1+\eta_0$, where $\eta_0, \eta_1\in \mathcal{L}$. Moreover, if  $\omega \in \ker(\nabla^2\cap H)$, then
%	\begin{equation}
%	\label{forminkernel}
%		\begin{split}
%	\omega=&\frac{1}{Q}\left(\sum_{j=0}^{d}\lambda_jR_0\cdots \hat R_j\cdots R_ddR_j+\gamma_1Sd(x-y)+\gamma_2(x-y)dS+\sum_{j=d+1}^{2d+1}\tilde \lambda_jR_{d+1}\cdots \hat R_j\cdots R_{2d+1}dR_j\right)\\
%	&+\frac{P}{Q^2}\sum_{j=d+1}^{2d+1} \lambda_jR_{d+1}\cdots \hat R_j\cdots R_{2d+1}dR_j, 
%	\end{split}
%	\end{equation}
%	\begin{equation}
%	\label{forminkernel}
%	\begin{split}
%		\omega=&\frac{1}{Q}\left(\sum_{j=0}^{d}\lambda_jR_0\cdots \hat R_j\cdots R_ddR_j+\sum_{j=d+1}^{2d+1}\tilde \lambda_jR_{d+1}\cdots \hat R_j\cdots R_{2d+1}dR_j\right)\\
%		&+\frac{P}{Q^2}\sum_{j=d+1}^{2d+1} \lambda_jR_{d+1}\cdots \hat R_j\cdots R_{2d+1}dR_j, 
%	\end{split}
%\end{equation}
\begin{equation}
\label{forminkernel}
	\omega=F\left(\sum_{k=0}^{d}\lambda_k\frac{dR_k}{R_k}+\lambda_{d+1}\frac{dQ}{Q}\right)
\end{equation}
where $\lambda_k\in  \C$ and $\sum_{k=0}^{d}\lambda_k+(d+1)\lambda_{d+1}=0$.% and $\sum_{j=d+1}^{2d+1}\tilde \lambda_j=0$.% and $P(x,y)-Q(x,y)=(x-y)S(x,y).$
\end{proposition}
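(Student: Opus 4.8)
The plan is to bootstrap from a description of $\ker\nabla$ to a description of $\ker\nabla^2$: the two parts of the statement reflect the two natural sources of flat classes, namely the logarithmic forms themselves and $F$ times a logarithmic form. The first step I would carry out is the identification $\ker\nabla=\mathcal{L}$. The inclusion $\mathcal{L}\subseteq\ker\nabla$ is immediate, since every $\eta\in\mathcal{L}$ is a sum of logarithmic forms, hence closed, so that $d\eta=0=dF\wedge 0$ and $\nabla\eta=0$ by the definition of the connection. For the reverse inclusion, if $\nabla\omega=0$ then \eqref{relaciontopoalg} gives $\frac{d}{dt}\int_{\delta_t}\omega=0$, so $\omega$ is represented in $\tilde H$ by a closed meromorphic $1$-form. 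Applying Proposition \ref{closeform} decomposes this representative as $\sum_j\lambda_j\frac{df_j}{f_j}+d(\cdots)$; the exact term vanishes in $\tilde H$, and the poles of the logarithmic part lie along the reducible fibers of $F$. After the perturbation the only reducible finite fiber is $F=0$, with components $R_0,\dots,R_d$, while the logarithmic forms attached to the irreducible finite fibers are zero in $\tilde H$, and the remaining poles lie along $D=\{Q=0\}$. Together with the projective condition $\imath_E\omega=0$ this forces $\omega\in\mathcal{L}$.

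The heart of the argument is the identity $\nabla(F\eta)=\eta$ for every $\eta\in\mathcal{L}$. Since $\eta$ is closed, $d(F\eta)=dF\wedge\eta$, and multiplying by $p_A(F)$ in the definition of $\nabla$ yields $\nabla(F\eta)=\eta$; equivalently, on the fiber $F\equiv t$ one has $\int_{\delta_t}F\eta=t\int_{\delta_t}\eta$ with $\int_{\delta_t}\eta$ constant, so $\frac{d}{dt}\int_{\delta_t}F\eta=\int_{\delta_t}\eta$ and \eqref{relaciontopoalg} gives the claim. Now I take $\omega\in\ker\nabla^2$. Then $\nabla\omega\in\ker\nabla=\mathcal{L}$; set $\eta_1:=\nabla\omega\in\mathcal{L}$. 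By the identity, $\nabla(\omega-F\eta_1)=\nabla\omega-\eta_1=0$, so $\omega-F\eta_1\in\ker\nabla=\mathcal{L}$. Writing $\eta_0:=\omega-F\eta_1$ gives $\omega=F\eta_1+\eta_0$ with $\eta_0,\eta_1\in\mathcal{L}$, which is the first assertion.

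For the second assertion, suppose in addition $\omega\in H$, i.e. $\omega$ has poles only along $D$. I would first check that $F\eta_1\in H$: writing $\eta_1=\sum_{k=0}^d\mu_k\frac{dR_k}{R_k}+\mu_{d+1}\frac{dQ}{Q}$, each term $F\cdot\frac{dR_k}{R_k}=\frac{P}{R_k}\cdot\frac{dR_k}{Q}$ (with $\frac{P}{R_k}$ a polynomial, since $R_k\mid P$ for $k\le d$) and $F\cdot\frac{dQ}{Q}=\frac{P\,dQ}{Q^2}$ have poles only along $D$. Hence $\eta_0=\omega-F\eta_1\in H$ as well, so $\eta_0\in\mathcal{L}\cap H$. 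Finally $\mathcal{L}\cap H=0$: an element $\sum_{k=0}^d\lambda_k\frac{dR_k}{R_k}+\lambda_{d+1}\frac{dQ}{Q}$ has residue $\lambda_k$ along the line $R_k\subset\{P=0\}$ for $k\le d$, and these lines are not contained in $D$, so membership in $H$ forces $\lambda_0=\cdots=\lambda_d=0$; the normalization $\sum_{k=0}^d\lambda_k+(d+1)\lambda_{d+1}=0$ then forces $\lambda_{d+1}=0$. Thus $\eta_0=0$ and $\omega=F\eta_1$, which is \eqref{forminkernel} after renaming $\mu_k$ as $\lambda_k$.

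The main obstacle is the reverse inclusion $\ker\nabla\subseteq\mathcal{L}$. The delicate point is the contribution of the fiber at infinity $D$ and the indeterminacy locus $N$: a closed representative could a priori carry unequal residues along the components $R_{d+1},\dots,R_{2d+1}$ of $D$, producing flat classes outside $\mathcal{L}$ that are detected only by the cycles in $I$ encircling the points of $N$. I would control these by restricting to the classes seen by the vanishing cycles generating $H_1(\overline{L_b})$, which is precisely the content of Theorem \ref{maintheorem2} and the setting of the center-focus problem, so that such purely-at-infinity contributions become irrelevant. Making this reduction precise, and verifying that the perturbation rendering $F=0$ the unique reducible finite fiber leaves $\nabla$ unchanged, is where the real care is required.
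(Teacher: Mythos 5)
Your skeleton matches the paper's: identify $\ker\nabla$ with $\mathcal{L}$, use $\nabla(F\eta)=\eta$ for $\eta\in\mathcal{L}$ to peel off $\omega=F\eta_1+\eta_0$, and kill $\eta_0$ when $\omega\in H$. Your verification of $\nabla(F\eta)=\eta$ and your residue argument for $\eta_0=0$ (using $R_k\mid P$ so that $F\eta_1\in H$, then $\mathcal{L}\cap H=0$ via the residues along $R_0,\dots,R_d$ and the normalization $\sum_k\lambda_k+(d+1)\lambda_{d+1}=0$) are actually more explicit than what the paper writes. The genuine gap is in the step $\ker\nabla\subseteq\mathcal{L}$: you pass from $\nabla\omega=0$ to ``$\frac{d}{dt}\int_{\delta_t}\omega=0$, so $\omega$ is represented in $\tilde H$ by a closed form.'' Constancy of periods does not by itself produce a closed representative --- that inference would need an injectivity statement for the period map which you neither state nor prove. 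The paper gets the closed representative algebraically: $\nabla\omega=0$ in $\tilde H$ means $\nabla\omega=dA+B\,dF$ at the level of forms, the defining relation of the connection gives $d\omega=dF\wedge\nabla\omega=dF\wedge dA$, hence $\omega\mp A\,dF$ is closed and represents $\omega$ in $\tilde H$ since $A\,dF\equiv 0$ there. Only then does Proposition \ref{closeform} apply. You should replace the period argument by this computation.

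Your closing paragraph correctly senses the remaining delicate point --- a closed representative could a priori have unequal residues on the components $R_{d+1},\dots,R_{2d+1}$ of $D$, or poles on other fibers --- but the fix you propose (restricting attention to the cycles generating $H_1(\overline{L_b})$ via Theorem \ref{maintheorem2}) is not available here: the proposition is a purely algebraic statement about classes in $\tilde H$, not about integrals over a monodromy orbit; that topological reduction belongs to the proof of Theorem \ref{maintheorem3}, which \emph{uses} this proposition. The paper's handling is different: the perturbation of $R_{2d+1}$ makes $F=0$ the only reducible finite fiber, and for an irreducible fiber $P-cQ=0$ one has the identity $\frac{d(P-cQ)}{P-cQ}=\frac{dQ}{Q}+\frac{dF}{F-c}$, so its logarithmic form collapses onto the $\frac{dQ}{Q}$ term modulo $dF\wedge\Omega^0$ in $\tilde H$; together with the constraint $\imath_E\omega=0$ this is what confines the logarithmic part to $\mathcal{L}$. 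Making that reduction explicit (rather than deferring to Theorem \ref{maintheorem2}) is what your write-up is missing.
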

\begin{proof}
Suppose that $\nabla \omega$ is zero in $\tilde H$, that is $\nabla \omega=dA+BdF$, where $A,B$ are rational functions on $\p^2$ with poles along the irreducible components of all critical fibers of $F$. By definition of Gauss-Manin connection, $d\omega=dF\wedge \nabla \omega$, then $d\omega=dF\wedge dA$. Thus, $d(\omega+AdF)=0$ and by using Proposition \ref{closeform}, we conclude $\omega-AdF=\eta+d\left(\frac{g}{h}\right),$
with $\eta\in \mathcal{L}$. %$h=R_0^{r_1-1}\cdots R_{2d+1}^{r_{2d+1}-1}(x-y)^{r_{2d+2}-1}S^{r_{2d+3}-1}$.
Therefore, $w=\eta$ in $\tilde H$.

For $\omega\in \ker(\nabla^2)$, we have $\nabla(\nabla(\omega))=0$, then by the previous result, $\nabla \omega=\eta_1$ where $\eta_1\in \mathcal{L}$. Moreover, since $\eta_1=\nabla(F\eta_1)$, then $\nabla(\omega-F\eta_1)=0$. Consequently, $\omega=F\eta_1+\eta_0$, where $\eta_0, \eta_1\in\mathcal{L}$. %Furthermore,
%$$\omega=F\sum_{j=0}^{2d+1}\lambda_j\frac{dR_j}{R_j}+(F-1)\left(\gamma_1\frac{d(x-y)}{x-y}+\gamma_2\frac{dS}{S}\right)+\alpha,$$
%where $\alpha=\left(\gamma_1\frac{d(x-y)}{x-y}+\gamma_2\frac{dS}{S}\right)+\eta_0$.
If in addition $\omega\in H$, then %$\eta_0=\sum_{j=d+1}^{2d+1}\tilde \lambda_j\frac{dR_j}{R_j}$ with $\sum_{j=d+1}^{2d+1} \tilde \lambda_j=0$, 
$\eta_0=0$.
\end{proof}
%Tareas:

%%%% 1) mirar lema 1.2 de Hossein, isomorfismo entre \tilde H y las formas con polos a lo largo de f1,   fr

% 3) Escribir kernel de Delta^2 (En el formato general como Coro 1.1)
% 4) Lema: Mirar que las unicas componentes reducibles en para C=p^1\infinity estan sobre el 0.
% 5) Usando el Lema, ver que en esta carta el kernesl es de la forma FdRi/Ri i=0... d
% 5.1) Una nota a tener en cuenta, para definir una forma en P2 es que \sum \lambda_i d_i =\sum \lambda_i=0
% 6) Comentar que para mirar la otra carta P=0, seria analogo, y en ese caso el Kernel se ve   FdRi/Ri i=d+1...2d+1

\section{Tangential center-focus problem}
\label{Foliations}
We conclude this article with an application of Theorem \ref{maintheorem2} to holomorphic foliations on $\p^2$. We initially review what a holomorphic foliation on $\p^2$ is, and how the degree of a foliation is defined. Then  we give a characterization of the relatively exact meromorphic 1-forms modulo $dF$, where $F$ is the rational map given by the quotient of products of lines in general position. Finally, the tangential center-focus problem is solved for this rational map in Theorem \ref{maintheorem3}.\\

\noindent A holomorphic foliation in $\p^2$ can de defined as follows (see for example \cite{AlcidesHolFol2AlgCurve}), in affine coordinates $(x,y)\in \C^2$, the foliation $\F=\F(\tilde \omega)$ is defined by a polynomial 1-form $\tilde \omega=R(x,y)dy-S(x,y)dx$. The singular points of $\F$ are the set $\{R=S=0\}$, and the leaves of the foliation are the solutions of $\tilde \omega=0$. Let $\pi: \p^2\setminus \{z=0\}\to \C^2$ be the map given by $(x,y,z)\to \left(\frac{x}{z}, \frac{y}{z}\right)$. Then the pullback $\pi^*\tilde \omega$ %=P(x/z, y/z)d\left(y/z\right)-Q(x/z,y/z)d\left(x/z\right)$
 has poles at $z=0$. Hence, it is possible to write $\pi^*\tilde \omega=z^{-k}\omega_0$, where $\omega_0$ is holomorphic, and $k$ is chosen in such a way that such $\omega_0$ is not divisible by $z$. %Moreover, $\omega_0$ satisfies $\imath_E\omega_0=0$, where $E=x\frac{\partial}{\partial x}+y\frac{\partial}{\partial y}+z\frac{\partial}{\partial z}$. 
 The classical definition of the degree of the foliation $\F(\tilde \omega)$ is $\deg(\F):=k-2$.   
 
 A canonical way to define a foliation in $\p^2$, without using a coordinate system, is the following. Let $\{U_i\}_{i\in I}$ be an open covering of $\p^2$, and let $\alpha_i$ be a collection of holomorphic 1-forms on $U_i$. Moreover, let $c_{ij}$ be holomorphic functions without zero defined on $U_i\cap U_j$, such that $\alpha_i=c_{ij}\alpha_j$, and $c_{ij}$ satisfies cocycle conditions. Thus the transition maps  $\{c_{ij}\}_{i,j\in I}$ define a line bundle $L$. Using this information we can define a foliation on $\p^2$. Thus, a foliation $\F=\F(\alpha)$ on $\p^2$  is given by $\alpha\in H^0(\p^2, \Omega^1\otimes L)$, where $\Omega^1$ is the cotangent bundle of $\p^2$. 
 
 As in \cite{hosseinabelian}, we want to define a foliation  $\F(\alpha)$ where $\alpha\in H^0(\p^2, \Omega^0\otimes L)$, as a meromorphic 1-form. For this we choose a non-zero
 section $s\in H^0(\p^2, L)$ and then define the meromorphic 1-form $\omega=\frac{\alpha}{s}$ in $\p^2$. We denote the foliation indistinctly as $\F(\alpha)$ or  $\F(\omega)$. A special family of foliations in which we are interested are the \textit{Hamiltonian foliations} or \textit{integrable foliations} (foliations with first integral). For a foliation $\F=\F(\alpha)$ with $\alpha \in H^0(\p^2, \Omega^2\otimes L)$, and $s\in H^0(\p^2, L)$, if $\frac{\alpha}{s}=df$  for a meromorphic function $f$ on $\p^2$, then $f$ is called \textit{first integral} of $\F$. If $\frac{\alpha}{s}$ is closed, then  the meromorphic section $s$ is called \textit{integrating factor}.
 
 Associated with the meromorphic section $s$ it is possible to define the divisor $$\text{div}(s)=(s)_0-(s)_{\infty},$$
 thus the \textit{degree} of the foliation is defined as $\deg(\F(\omega))=\deg(\text{div}(s))-2$. 

 \begin{example}
 	Let $\alpha=pBdA-qAdB$, where $A,B$ are homogeneous polynomials in $\C^3$ where $\deg(A)=a+1$, $\deg(B)=b+1$ and $\frac{q}{p}=\frac{a+1}{b+1}$. A first integral of $\F(\omega)$ is $f=\frac{A^{p}}{B^{q}}$, with integrating factor $s=\frac{B^{q+1}}{A^{p-1}}$. Therefore, $\deg(\F)=(b+1)(q+1)-(a+1)(p-1)-2=a+b$.
 \end{example}
 
 \begin{example} Let $\alpha=f_1\cdots f_n\sum_{k=1}^n \lambda_k\frac{df_k}{f_k}=\sum_{k=1}^nf_1\cdots \hat{f_{k}}\cdots f_n \lambda_k df_k$, where $f_j$ are homogeneous polynomials in $\C^3$ with $\deg(f_j)=d_j$, and  $\lambda_j\in \C^{*}$ satisfy $\sum_{j=1}^n \lambda_j d_j=0$. The foliation $\F(\omega)$ has not first integral, in fact this family of foliations is called \textit{logarithmic foliations}. Note that $s=f_1\cdots f_n$ is a integrating factor because $d(\frac{\alpha}{s})=d\left(\sum_{j=1}^n \lambda_j\frac{df_j}{d_f} \right)=0$. Therefore, $\deg(\F)=\sum_{j=1}^n d_j-2.$ 
 \end{example}
 
 \begin{remark}
 	\label{degreeorder}
 	Note that if the section $s$ has no poles, then the order of the pole of the meromorphic form $\omega=\frac{\alpha}{s}$ defining the foliation $\F=\F(\omega)$ is $\deg(\F)+2$, and the pole divisor of $\omega$ is $(s)_0$. This happens when $\alpha$ has no zero-divisor, and we can suppose this by changing the line bundle $L$ (see \cite{hosseinabelian}). 
 \end{remark}
\noindent Let $\omega_1$ be a  meromorphic  1-form. We say that $\omega_1$ is relatively exact modulo the foliation $\F$ if the restriction of $\omega_1$ to any leaf of $\F$ is exact. That is, for any leaf $L$ of $\F$, there exists a meromorphic function $g$ on $L$ such that $\omega_1|_L=dg$. Again, 	let $F(x,y,z)=\frac{P(x,y,z)}{Q(x,y,z)}=\frac{\Pi_{k=0}^d R_k}{\Pi_{k=d+1}^{2d+1}R_k}$,  where $R_k=(2d+1-k)x+ky-k(2d+1-k)z$ for $k=0,\ldots,2d$, and $R_{2d+1}=(2d+1)y+\varepsilon$ for a $\varepsilon>0$ small enough.   From here on, we consider the foliation $\F$ as the foliation defined by the 1-form $dF$. 
For any integrable foliation, in particular for $\F(dF)$, it is easy to see that a meromorphic 1-form $\omega_1$ is relatively exact modulo $\F$ if and only if $\int_{\delta}\omega=0,$
for any loop $\delta$ in the leaves of $\F$, where the integral is well-defined.

 There are several works where there are  characterizations of relatively exact 1-forms  modulo  integrable foliations, for example the reader can see \cite{gavrilovpetrov, mucino, hosseinabelian, yadollahBrieskorn}. Inspired by these works, in Proposition \ref{formasrelativamenteexactas} we show for our particular case  $\F=\F(dF)$ a characterization of the relatively exact meromorphic 1-forms modulo $\F$. 

\begin{proposition}
	\label{formasrelativamenteexactas}
	Let $\omega$ be a meromorphic 1-form with pole divisor $nD$, where $D=\{Q=0\}$. If $\omega$ is relatively exact modulo $\F(dF)$, then $$\omega=dG+TdF,$$
	where $G$ and $T$ are meromorphic function with pole divisor $nD$ and $(n-2)D$, respectively.
\end{proposition}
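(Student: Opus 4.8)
The plan is to produce the primitive $G$ by integrating $\omega$ along the leaves of $\F(dF)$ and then to read off $T$ from the identity $\omega-dG=T\,dF$. First I would work over the open set $U=\p^2\setminus(D\cup N\cup F^{-1}(\tilde C))$ on which $F$ is a holomorphic submersion with connected fibres, where $N=\{P=Q=0\}$ is the indeterminacy locus and $F^{-1}(\tilde C)$ collects the critical fibres. By hypothesis $\omega$ is relatively exact, so its restriction to every leaf $F^{-1}(t)$ is exact; equivalently every period $\int_{\delta}\omega$ along a loop $\delta\subset F^{-1}(t)$ vanishes. Fixing a base point in each fibre and setting $G(p)=\int\omega$ along a path inside $F^{-1}(F(p))$, the vanishing of all fibre periods makes this integral independent of the chosen path, so $G$ is a well-defined holomorphic function on $U$, determined up to the addition of a function of $F$ alone (the freedom in the fibrewise integration constant).

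By construction $dG$ and $\omega$ agree on every tangent vector to a fibre, so $\eta:=\omega-dG$ annihilates $\ker(dF)$ at each point of $U$. A $1$-form vanishing on the kernel of the submersion $F$ is a functional multiple of $dF$, whence $\eta=T\,dF$ for a holomorphic function $T$ on $U$, and therefore $\omega=dG+T\,dF$ on $U$.

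Next I would extend $G$ and $T$ to genuine meromorphic functions on all of $\p^2$. Across the indeterminacy points $N$, which form a set of codimension two, meromorphic functions extend automatically, and a similar argument handles the critical fibres. The essential point is the behaviour along $D=\{Q=0\}$. Here I would argue by a local Laurent analysis: near a generic point of $D$ one has $Q=u\cdot(\text{unit})$ in suitable local coordinates $(u,v)$, so $F=P/Q$ has a simple pole and $dF$ has a pole of order exactly two along $D$. Comparing the pole of order $n$ of $\omega$ with the order-two pole of $dF$ in the relation $\omega=dG+T\,dF$ forces the leading polar part of $\omega$ along $D$ to be carried by $T\,dF$, whence $T$ has pole divisor at most $(n-2)D$; the remaining form $dG=\omega-T\,dF$ then has pole order at most $n$ along $D$, and integrating shows that $G$ has pole divisor at most $nD$ (the full order $n$ being attained only at the special points of $D$, such as those lying over $N$).

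The main obstacle is precisely this passage from the open fibration $U$ to all of $\p^2$ with the stated pole control: one must simultaneously verify that the fibrewise primitive is single-valued and extends meromorphically across $D$, $N$ and the critical fibres, and that the extension respects the pole divisors $nD$ and $(n-2)D$. The single-valuedness is guaranteed by the hypothesis that $\omega$ is relatively exact on \emph{every} leaf (not merely on one continuous family), so the only genuine work is the local normal-form computation along $D$ together with the check that no poles worse than $nD$ are created at the indeterminacy points. Equivalently, one may phrase the existence of the decomposition in the language of Section \ref{Brieskorn}: relative exactness means that all periods of $\omega$ vanish, hence its class is zero in the Brieskorn/Petrov module $H=\Omega^1(*D)/(d\Omega^0(*D)+dF\wedge\Omega^0(*D))$, which is exactly the assertion that $\omega=dG+T\,dF$ with $G,T\in\Omega^0(*D)$; the pole-order refinement is then obtained by tracking degrees within this quotient.
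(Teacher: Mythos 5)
Your overall strategy (fibrewise primitive $G$, then $\omega-dG$ proportional to $dF$) is the same as the paper's, but there is a genuine gap at the very first step: ``fixing a base point in each fibre.'' Relative exactness guarantees that the integral is path-independent \emph{within} each fibre, but it does not give you a coherent, holomorphically varying choice of integration constant \emph{across} fibres. A degree-one holomorphic section of $F$ does not exist in general (a line in $\p^2$ meets each fibre in $d+1$ points, not one), and if you only make local choices over discs in the base you obtain local primitives $G_i$ differing by functions $h_{ij}(F)$ of $F$ alone; gluing these into a single meromorphic $G$ on $\p^2$ with controlled poles is an additional cocycle problem you do not address. The paper sidesteps this entirely by symmetrizing: it sets $G(u)=\frac{1}{d+1}\sum_{i=1}^{d+1}\int_u^{p_i}\omega$, where $\{p_1,\dots,p_{d+1}\}=F^{-1}(F(u))\cap\{z=0\}$; the \emph{set} of endpoints is canonically attached to the fibre, so $G$ is globally single-valued. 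Your remark that single-valuedness ``is guaranteed by the hypothesis that $\omega$ is relatively exact on every leaf'' conflates path-independence inside a leaf with this global coherence issue.

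Two further points. First, your pole analysis along $D$ is only asserted; the paper makes it concrete by writing $\int_u^{p_i}\omega=F^{n}\int_u^{p_i}F^{-n}\omega$ with $F^{-n}\omega$ holomorphic near $D$ (away from $N$), which bounds the pole of $G$ by $nD$ directly, and then reads off the $(n-2)D$ bound on $T$ from $TdF=\omega-dG$ and the fact that $dF$ has pole divisor $2D$; your reversed order (bound $T$ first, then $G$) could be made to work but is not carried out. Second, the closing ``equivalent'' reformulation --- that vanishing of all periods implies the class of $\omega$ is zero in the Brieskorn/Petrov module $H$ --- is circular: that implication is essentially the content of the proposition (injectivity of the period map on $H$), not something you may assume.
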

\begin{proof}
	Let $L$ be the line in $\p^2$ defined by $\{z=0\}$ and let $U=\p^2\setminus N$, where $N$ are the points of indeterminacy of $F$. For $u\in U$, we consider  the set of points $F^{-1}(F(u))\cap L=\{p_1, p_2, \ldots, p_{d+1}\}$. Then, it is possible to set the map $G:\p^2 \setminus N\to \C$, defined by $$G(u)=\frac{1}{d+1}\left(\sum_{i=1}^{d+1}\int_u^{p_i}\omega\right),$$
where the integration is performed over a path in the fiber that passes through $u$, starting at $u$ and ending at $p_i$. The map $G$ does not depend on the path choice because $\int_{\delta}\omega=0$ for any loop $\delta \subset F^{-1}(F(u)) $. Moreover $G$ is holomorphic in $\p^2\setminus D$. In order to prove that $G$ is meromorphic in $\p^2$, by Levi's extension theorem (see for example \cite{FritzscheGrauert}[Ch4, \S 4], \cite{NetoScarduabook}[\S 7.3]), it is enough to show that $G$ is meromorphic in $U$. For $u\in D$, we have $$\int_u^{p_i}\omega=F^{n}\int_u^{p_i}F^{-n}\omega,$$
where $F^{-n}\omega_1$ is holomorphic. Then, this integral has order pole at most $n$ along $D$.

On the oder hand,  note that 
\begin{align*}
dG\wedge dF=\frac{1}{d+1}\left(\sum_{i=1}^{d+1}d\left(\int_u^{p_i}\omega \right)\wedge dF \right)=\omega\wedge dF,
\end{align*}
then $(dG-\omega)\wedge dF=0$. Therefore, there exits a rational function $T$, such that $TdF=\omega-dG$. Finally, we conclude by observing that $dF$ has pole divisor $2D$. 
\end{proof}
\noindent It is necessary to show that the integrals $\int_{\delta_t}\nabla^2\omega$ over cycles around indeterminacy points are zero, this will be shown in Proposition \ref{IntegralzeroinIdeterminancy}.
The following Lemma will be used in the proof of this proposition.

\begin{lemma}
	\label{formasobreFholo}
Let $\F(\omega)$ be a holomorphic foliation in $\p^2$ of degree $2d$, such that the meromorphic 1-form $\omega$ has poles along $D=\{Q=0\}$. Then the 1-form $\omega\cdot\left(\frac{ Q}{F}\right)$ is holomorphic in $\{Q=0\}\setminus N$.
\end{lemma}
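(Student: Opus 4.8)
The plan is to pin down the exact order of the pole of $\omega$ along $D$ from the degree hypothesis, and then to observe that multiplying by $Q/F$ simultaneously cancels this pole and introduces no new one away from $N$. First I would invoke Remark \ref{degreeorder}: since the foliation $\F(\omega)$ has degree $2d$ and $\omega$ has poles only along $D=\{Q=0\}$, the pole divisor of $\omega$ has degree $\deg(\F)+2=2d+2$. Because $D$ is the reduced divisor cut out by the degree-$(d+1)$ polynomial $Q$ and the polar locus is supported on $D$, the only possibility is that the pole divisor equals $2D$; that is, $\omega$ has a pole of order exactly $2$ along $D$.

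Next I would pass to a local description near an arbitrary point $p\in\{Q=0\}\setminus N$. Since the pole divisor is $2D$, I can write $\omega=\eta/Q^2$, where $\eta$ is a holomorphic $1$-form (coming from the holomorphic numerator section $\alpha$ of $\omega=\alpha/s$). A direct computation gives $\frac{Q}{F}=\frac{Q}{P/Q}=\frac{Q^2}{P}$, so that
\[
\omega\cdot\frac{Q}{F}=\frac{\eta}{Q^2}\cdot\frac{Q^2}{P}=\frac{\eta}{P}.
\]
The two factors of $Q^2$ cancel, which removes the pole of $\omega$ along $D$.

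Finally, I would conclude by noting that on $\{Q=0\}\setminus N$ we have $P\neq 0$, since $N=\{P=Q=0\}$. Hence $\eta/P$ is the quotient of a holomorphic $1$-form by a nonvanishing holomorphic function, and is therefore holomorphic at every point of $\{Q=0\}\setminus N$; this holds even at the singular points of $D$ (the pairwise intersections of the lines of $Q$), because those points also lie outside $N$. The main obstacle is the first step: one must argue carefully that the degree-$2d$ hypothesis, together with the fact that $\omega$ has poles only along $D$, forces the pole order to be exactly $2$ and not higher. This bound is essential, for if $\omega$ had a pole of order $\geq 3$ along $D$ the factor $Q^2$ would only partially cancel it and a residual pole along $D$ would survive in $\eta/P$, destroying holomorphy.
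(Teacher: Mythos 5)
Your proposal is correct and follows essentially the same route as the paper: both invoke Remark \ref{degreeorder} to write $\omega=\alpha/Q^2$, cancel the $Q^2$ against $Q/F=Q^2/P$, and conclude holomorphy from $\{P=0\}\cap\{Q=0\}\subset N$. The only difference is that you spell out explicitly why the degree hypothesis forces the pole divisor to be exactly $2D$, a step the paper leaves implicit in the remark.
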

\begin{proof}
	By Remark \ref{degreeorder}, it is possible to write $ \omega=\frac{\alpha}{Q^2}$, with $\alpha$ as before. Thus,
	 $\omega\cdot\left(\frac{ Q}{F}\right)=\frac{\alpha} {P}.$
	 %$$\omega\cdot\left(\frac{ R_{k_0}^{2d}}{F}\right)^{\frac{2d+2}{2d+1}}=\frac{\omega_0 \prod_{k=d+1,k\neq k_0}^{2d+1}(R_k)^{\frac{2d+2}{2d+1}}}{P^{\frac{2d+2}{2d+1}}}.$$
 Moreover, $\{P=0\}\cap \{Q=0\}\subset N$.
\end{proof}
\noindent Inspired by \cite{hosseincenterlog}, we show that the integrals of $\nabla^2\omega$ along loops encircling the  points of indeterminacy $N$ are zero. 
\begin{proposition}
	\label{IntegralzeroinIdeterminancy}
Let $\F(\omega)$ be a holomorphic foliation in $\p^2$ of degree $2d$, such that the meromorphic 1-form $\omega$ has poles along $D=\{Q=0\}$. Let $c$ be an indeterminacy point, and let  $\{\delta_t\subset F^{-1}(t)\}_{t\in\C}$ be a continuous family of cycles around $c$. Then $\int_{\delta_t}\nabla^2\omega=0.$
\end{proposition}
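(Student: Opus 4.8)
The plan is to turn the statement into a statement about an Abelian integral and its second derivative. Iterating the fundamental relation \eqref{relaciontopoalg}, I first observe that for a small loop $\delta_t$ encircling the puncture $c\in\overline{L_t}=\overline{F^{-1}(t)}$ the integral $\int_{\delta_t}\omega$ is well defined: near $c$ the only pole of $\omega$ met by the compact fiber is $c$ itself, because $\overline{L_t}\cap D=\{P=tQ\}\cap\{Q=0\}=N$. Hence, for $t$ regular,
\begin{equation*}
\int_{\delta_t}\nabla^2\omega=\frac{d^2}{dt^2}\int_{\delta_t}\omega=2\pi i\,\rho''(t),\qquad \rho(t):=\operatorname{Res}_{c}\big(\omega|_{\overline{L_t}}\big),
\end{equation*}
so the whole problem reduces to proving that the residue function $\rho(t)$ is affine-linear in $t$.

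Next I would set up adapted local coordinates. Since $c$ is the transverse intersection of the line of $P$ and the line of $Q$ through it, $dP$ and $dQ$ are independent at $c$, so $(P,Q)$ is a holomorphic coordinate system near $c$, with $c=\{P=Q=0\}$ and $F=P/Q$. Writing $\omega=\alpha/Q^2$ with $\alpha$ holomorphic (Lemma \ref{formasobreFholo} together with Remark \ref{degreeorder}) and expanding $\alpha=\hat A\,dP+\hat B\,dQ$ near $c$, the fiber $F=t$ is $\{P=tQ\}$; parametrizing it by $Q$, restricting (so $dP=t\,dQ$), and taking the residue of the resulting order-two pole yields
\begin{equation*}
\rho(t)=\hat A_P(c)\,t^2+\big(\hat A_Q(c)+\hat B_P(c)\big)\,t+\hat B_Q(c),
\end{equation*}
where $\hat A=\dfrac{\alpha\wedge dQ}{dP\wedge dQ}$ is the $dP$-component of $\alpha$. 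In particular $\rho''(t)=2\hat A_P(c)$ is constant, and the entire assertion collapses to the single identity $\hat A_P(c)=0$.

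The hard part will be establishing this vanishing. My plan is to extract $\hat A_P(c)=0$ from the full force of the hypotheses: that $\omega$ has a pole of order \emph{exactly} two along $D$ (degree $2d$), that $\imath_E\alpha=0$, and that $\alpha/P$ is holomorphic along $D\setminus N$ as guaranteed by Lemma \ref{formasobreFholo}. The model to keep in mind is the tame form $F\,\frac{dQ}{Q}=\frac{P\,dQ}{Q^2}$, for which $\rho(t)=t$ and the quadratic term is absent; the goal is to show that, to first order along $D$ at the indeterminacy point, the leading polar coefficient of any degree-$2d$ foliation behaves like this model, i.e. that $\hat A$ depends on $Q$ alone to first order at $c$. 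Once $\hat A_P(c)=0$ is secured, $\rho$ is affine-linear, $\rho''\equiv0$, and therefore $\int_{\delta_t}\nabla^2\omega=0$. I expect this last structural step—controlling the first-order behaviour of the order-two polar part of the foliation at the point where a line of $P$ meets a line of $Q$—to be the genuine obstacle, and I would attack it by combining the coordinate expansion above with the constraints imposed by $\imath_E\alpha=0$ and Lemma \ref{formasobreFholo}.
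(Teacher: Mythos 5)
Your proposal stops exactly where the proof has to begin. The reduction you carry out --- writing $\omega=\alpha/Q^2$ in the coordinates $(P,Q)$ near $c$, restricting to $P=tQ$, and computing
$\rho(t)=\hat A_P(c)\,t^2+\bigl(\hat A_Q(c)+\hat B_P(c)\bigr)t+\hat B_Q(c)$ --- is correct, and it is a clean way of seeing that $h(t)=\int_{\delta_t}\omega$ is a polynomial of degree at most $2$ in $t$. But the proposition \emph{is} the statement that the quadratic term vanishes, and you explicitly defer the proof of $\hat A_P(c)=0$, calling it ``the genuine obstacle.'' That is a genuine gap, not a technical loose end: everything you have written is a routine Laurent expansion, and the single identity you leave unproved carries the entire content of the result. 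Moreover, the route you sketch for closing it is unlikely to work as stated. A purely local analysis at $c$ cannot produce $\hat A_P(c)=0$: the condition $\imath_E\alpha=0$ only says $\omega$ descends to $\p^2$, and Lemma \ref{formasobreFholo} (holomorphy of $\alpha/P$ on $\{Q=0\}\setminus N$) imposes no constraint in a punctured neighbourhood of $c$ on the line $\{Q=0\}$, where $P\neq 0$ automatically. The germ of $\hat A$ at $c$ is not pinned down by these local data.

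The paper closes precisely this gap by a global, Liouville-type argument rather than a pointwise one: it observes that $h(t)$ is entire (the loop around $c$ deforms over all of $\C$), writes $\frac{h(t)}{t}=\int_{\delta_t}\frac{\omega}{F}$, and argues via Lemma \ref{formasobreFholo} that this quantity stays bounded as $t\to\infty$, so that $h$ is a polynomial of degree at most $1$ and $h''\equiv 0$. In your normalization, boundedness of $h(t)/t$ at infinity is exactly the statement $\hat A_P(c)=0$, so the two approaches meet at the same point; the difference is that the paper extracts the bound from the behaviour of $\omega$ along the fibre at infinity $D$, not from the first-order jet of $\alpha$ at $c$. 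If you want to complete your argument, you should therefore redirect your effort: instead of trying to force $\hat A_P(c)=0$ out of the expansion at $c$, prove directly that $t\mapsto\int_{\delta_t}\omega$ has at most a simple pole at $t=\infty$ by analysing the limit of the loops $\delta_t$ inside $D\setminus N$ and the order of $\omega$ there. Be warned that this step is delicate --- the passage from $\int_{\delta_t}\omega/F$ (which equals $h(t)/t$ on the nose) to $\int_{\delta_t}\omega\,(Q/F)$ (which is what Lemma \ref{formasobreFholo} actually controls) involves an extra factor of $Q$ that is not constant on the fibre, so you should track carefully how the degree hypothesis $\deg\F(\omega)=2d$ enters; in your framework that is the hypothesis that must ultimately kill $\hat A_P(c)$.
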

\begin{proof}
	We define the holomorphic function  $h(t)=\int_{\delta_t}\omega$ in $\C$. At $t=\infty$, we have $F^{-1}(t)=\left(\frac{F}{Q}\right)^{-1}(t)$, consequently %$$\frac{h(t)}{t^{\frac{2d+2}{2d+1}}}=\int _{\delta_t}\frac{\omega}{F^{\frac{2d+2}{2d+1}}}=\int_{\delta_t}\omega\cdot\left(\frac{ R_{k_0}^{2d}}{F}\right)^{\frac{2d+2}{2d+1}},$$
	$$\frac{h(t)}{t}=\int _{\delta_t}\frac{\omega}{F}=\int _{\delta_t}\omega\left(\frac{Q}{F}\right),$$ 
	and by Lemma \ref{formasobreFholo}  this integral is finite. Therefore, $h(t)$ is a polynomial of degree at most  $1$. Finally, by the definition of Gauss-Manin connection, we have
	$$\int_{\delta_t}\nabla^2\omega=\frac{d^2}{dt^2}h(t)=0$$
	\end{proof}
 \noindent At this point we already have the tools to show the main theorem of this section, namely in Theorem \ref{maintheorem3} we solve the tangential center-focus problem for the rational map $F$ defined as the quotient of the product  of lines in general position, $F(x,y,z)=\frac{P(x,y,z)}{Q(x,y,z)}=\frac{\Pi_{k=0}^d R_k}{\Pi_{k=d+1}^{2d+1}R_k}$,  where $R_k=(2d+1-k)x+ky-k(2d+1-k)z$ for $k=0,\ldots,2d$, and $R_{2d+1}=(2d+1)y+\varepsilon$ for a $\varepsilon>0$ small enough. 
\begin{theorem}
\label{maintheorem3}
Let $\F(\omega)$ be a holomorphic foliation in $\p^2$ of degree $2d$, such that the meromorphic 1-form $\omega$ has poles along $D=\{Q=0\}$. Let $\delta_t\subset F^{-1}(t)$ be a continuous family of vanishing cycles around a center singularity, such that $\int_{\delta_t}\omega=0$. Then the form $\omega$ is written as
%\begin{equation*}
%\omega=F \sum_{j=0}^{2d+1}\lambda_j\frac{dR_j}{R_j}+\frac{1}{Q}\sum_{j=d+1}^{2d+1}\tilde \lambda_j\frac{dR_j}{R_j}+dG,
%\end{equation*}
\begin{equation*}
	\omega=F\left(\sum_{k=0}^{d}\lambda_k\frac{dR_k}{R_k}+\lambda_{d+1}\frac{dQ}{Q}\right)+dG,
\end{equation*}
where $\lambda_k\in  \C$ and $\sum_{k=0}^{d}\lambda_k+(d+1)\lambda_{d+1}=0$
 %$\sum_{d+1}^{2d+1}\tilde \lambda_j=0$ 
and $G$ is a meromorphic function with poles along $D$.
\end{theorem}
\begin{proof}
For a regular value $b$ and $t\in(\C, b)$, from \eqref{relaciontopoalg}, we have $\int_{\delta_t}\nabla^2 \omega=\frac{d^2}{dt^2}\int_{\delta_t}\omega=0,$ and by using Theorem \ref{maintheorem2} we conclude that $\int_{\delta}\nabla^2\omega=0$ for any $\delta\in H_1(\overline{F^{-1}(t)})$. For a cycle $\delta'$  in $I$, by Proposition \ref{IntegralzeroinIdeterminancy}, the integral over $\delta'$ is also zero. Therefore, since $H_1(F^{-1}(t))=H_1(\overline{F^{-1}(t)})\oplus I$, $\nabla^2 \omega$ is relatively exact modulo the foliation given by $dF$.

By Proposition \ref{formasrelativamenteexactas} and because  $\omega$ has pole along $D$ we conclude  that $\omega$ is in  $\ker(\nabla^2\cap H)$. Hence, from Proposition \ref{kernelnabla2}, we have 
%$$\omega=F\sum_{j=0}^{2d+1}\lambda_j\frac{dR_j}{R_j}+\frac{1}{Q}\sum_{j=d+1}^{2d+1}\tilde \lambda_j\frac{dR_j}{R_j}+d\tilde G+TdF,$$
$$	\omega=F\left(\sum_{k=0}^{d}\lambda_k\frac{dR_k}{R_k}+\lambda_{d+1}\frac{dQ}{Q}\right)+d\tilde G+TdF,$$
where $\lambda_k \in \C$ and  $\sum_{k=0}^{d}\lambda_k+(d+1)\lambda_{d+1}=0$. Moreover, $\tilde  G$ and $T$ are meromorphic functions with pole along $D$. Since $\deg(\omega)=2d$ and $\deg(Q)=d+1$, then the order of the pole of $\tilde G$ along $D$ is at most  2, and the order of the pole of $T$ along $D$ is 0. Furthermore, $TdF=\frac{T}{Q^2}(QdP-PdQ),$
then $2(d+1)-\deg(T)-2=2d$. Thus, $\deg(T)=0$, and so  $G$ is defined as $\tilde  G+TF$.
\end{proof}

%% Nota: degree of w:
%% F(\sum dR/R +dQ/Q)=F/(PQ) (\sum R0-...\hat R_k...Rn*Q dR_k/R_k+PdQ)=1/Q^2*(...)

% 7) Mirar buena definicion de Modulos de Brieskorn y Gauss manin conecction. Corolario 2.1 de articulo de Hossein de extenson de gauss manin para cuando el polo tiene singularidades aisladas. Apoyando esto ultimo con paper de Yadollah.
% 8) Buscar la referenci en el AVG de GM conection d/dt int_\delta \omega=\int_delta \nabla \omega.
% 9) Tal vez hacer un dibujo de un toro sin puntos para ver como son los ciclos en I

\clearpage
\begin{normalsize}
	\bibliographystyle{abbrv}
   \bibliography{References}
\end{normalsize}

\bigskip

\bigskip
%\sf{\noindent Jorge Duque\\
%	Instituto de Matematica Pura e Aplicada (IMPA),  \\ 
%	Estrada Dona Castorina 110, Rio de Janeiro, 22460-320, RJ, Brazil.\\
%	daflopez@impa.br}

\sf{\noindent Daniel L\'opez Garcia\\
	Instituto de Matem\'atica e Estat\'istica da Universidade de S\~ao Paulo (IME-USP),  \\ 
	Rua do Mat\~ao, 1010, S\~ao Paulo 05508-090,
	 SP, Brazil.\\
dflopezga@ime.usp.br}

\end{document}